\newfont{\bb}{msbm10}
\newtheorem{example}{Example}[section]
\newtheorem{theorem}{Theorem}[section]
\newtheorem{definition}{Definition}[section]
\newtheorem{lemma}{Lemma}[section]
\numberwithin{equation}{section}
\newenvironment{Proof}{{\noindent\it Proof.}}{\hfill $\square$ \par}
\title{A fast block nonlinear Bregman-Kaczmarz method with averaging for nonlinear sparse signal recovery}
\author{
	Aqin Xiao\\
	School of Mathematical Sciences, Tongji University, \\
	Shanghai, 200092, PR China. \\
	Email:xiaoaqin@tongji.edu.cn \\  
 Xiangyu Gao\\
 School of Mathematical Sciences, Tongji University, \\
  Shanghai, 200092, PR China. \\
Email:2211182@tongji.edu.cn \\
and\\
  Jun-Feng Yin\thanks{Corresponding author.}\\
	School of Mathematical Sciences, Tongji University,\\
 Key Laboratory of Intelligent Computing and Applications (Ministry of Education),\\
	Shanghai, 200092, PR China.\\
	Email:yinjf@tongji.edu.cn\\ 
}
\begin{document}
\date{ }
\maketitle  

\begin{abstract}
Recovery of a sparse signal from a nonlinear system arises in many practical applications including compressive sensing, image reconstruction and machine learning.
In this paper, a fast block nonlinear Bregman-Kaczmarz method with averaging is presented for nonlinear sparse signal recovery problems. Theoretical analysis proves that the averaging block nonlinear Bregman-Kaczmarz method with both constant stepsizes and adaptive stepsizes are convergent. 
Numerical experiments demonstrate the effectiveness of the averaging block nonlinear Bregman-Kaczmarz method, which converges faster than the existing nonlinear Bregman-Kaczmarz methods. 
\end{abstract}

\noindent{\bf Keywords.}\ Nonlinear systems, Bregman-Kaczmarz method, Averaging block, Sparse signal recovery, Convergence.

\section{Introduction}
\label{sec:intro_abnbk}	  
Consider the sparse solution of the nonlinear equations
\begin{equation}\label{eqn:nonlineareq}
 F(x) = 0,
 \end{equation}
 where $x\in\mathbb{R}^n$ and $F(x)=(F_1(x),F_2(x),\cdots,F_m(x))^T$ is a nonlinear differentiable function. This nonlinear problem often arises in many scientific and engineering applications, for instance, compressed sensing \cite{2013Blu,2024LMR}, signal processing \cite{2017HWJ}, image reconstruction \cite{2017KMW}, wireless communication \cite{2020GZLYS} and deep learning \cite{2022LZB}.
 To find a sparse representation $\hat{x}$ satisfying $F(\hat{x})=0$, \eqref{eqn:nonlineareq} can be reformulated as the nonlinear constrained optimization problem
\begin{equation}
 \label{eqn:NBKproblem}
 \min\limits_{x\in \mathbb{R}^n} \varphi(x), \quad \text{s.t.} F(x) = 0,
\end{equation}
where $\varphi(x)$ is a convex and nonsmooth function which is also called sparsity inducing function.

The nonlinear Kaczmarz method \cite{2022YLG} is a simple and effective iterative method for solving the nonlinear system \eqref{eqn:NBKproblem}.
Let $F^{\prime}(x)=[\nabla F_1(x), \cdots, \nabla F_m(x)]^T$ be the Jacobian matrix of $F$ at $x$ and $\nabla F_i(x)^T$ be its $i$th row. At each iteration of the nonlinear Kaczmarz method \cite{2022WLB}, the current iterate $x_k$ is orthogonally projected onto the solution set of the local linearization of a component of equation $F_{i_k}(x)=0$ at $x_k$, that is, $x_{k+1}$ is obtained by solving the following constrained optimization problem  
\begin{equation*}
x_{k+1}=\mathop{\text{argmin}}\limits_{x\in \mathbb{R}^{n}}\|x-x_k\|_2^2,\quad \text{s.t.} \quad F_{i_k}(x_k)+\nabla F_{i_k}(x_k)^T(x-x_k)=0, 
\end{equation*}
where the index $i_k$ is cyclically or randomly selected from $[m]=\{1,\cdots,m\}$. 
The nonlinear Kaczmarz method was further accelerated by selecting the working row corresponding to the maximum residual of partial \cite{2023ZBLW,2024ZLT} or complete \cite{2023ZWZ,2024XYb} nonlinear equations. 
However, the nonlinear Kaczmarz method is efficient to compute the smooth solutions but difficult to capture special features of the solutions such as sparsity and piecewise constancy \cite{2013JW,2013AGKF}.
 
Recently, in order to reconstruct the nonsmooth solution of \eqref{eqn:NBKproblem}, by combining the nonlinear Kaczmarz method with Bregman projections, a nonlinear Bregman-Kaczmarz method was proposed and its expected convergence was proposed \cite{2024GLW}. At each iteration of the nonlinear Bregman-Kaczmarz method, the current iterate $x_k$ is obtained by solving the following constrained optimization problem
\begin{equation}\label{eq:bregnonlinearopti}
x_{k+1}= \mathop{\mathrm{argmin}}_{x\in\mathbb{R}^n} D_\varphi^{x_k^*}(x_k,x),\quad \text{s.t.} \quad F_{i_k}(x_k)+\nabla F_{i_k}(x_k)^T(x-x_k)=0,  
\end{equation}
where $D_\varphi(\cdot)$ represents the Bregman distance with respect to a convex function $\varphi$ and the index $i_k$ is randomly selected. 
By choosing $\varphi$ to be a sparsity inducing function, the nonlinear Bregman-Kaczmarz can be used for sparse signal recovery from nonlinear measurements \cite{2013BE}. 
For more studies on the nonlinear Bregman-Kaczmarz method, we refer the readers to \cite{2021GHT,2023TL,2023GC}. 
 
In this paper, in order to accelerate the convergence of the nonlinear Bregman-Kaczmarz method, an averaging block nonlinear Bregman-Kaczmarz method is developed for nonlinear sparse signal recovery.
Theoretical analysis gives the upper bound of the convergence rate of the averaging block nonlinear Bregman-Kaczmarz method with both constant stepsizes and adaptive stepsizes. 
Numerical experiments on different nonlinear sparse signal recovery problems show that the averaging block nonlinear Bregman-Kaczmarz method is effective and the remarkable acceleration in convergence to the solution.

The rest of this paper is organized as follows. 
In Section \ref{secabnbrgk_abnbk}, a fast block nonlinear Bregman-Kaczmarz method with the averaging technique is developed.
The convergence theory of the averaging block nonlinear Bregman-Kaczmarz method with constant stepsizes and adaptive stepsizes is established in Section \ref{sec:convanal}. 
Numerical experiments on nonlinear sparse signal recovery are provided to show the effectiveness of the proposed method in Section \ref{secnumer_abnbk}.
Finally, some conclusions are drawn in Section \ref{secconclu_abnbk}.

\section{Fast block nonlinear Bregman-Kaczmarz method} \label{secabnbrgk_abnbk}

In this section, the nonlinear Bregman-Kaczmarz method is firstly reviewed and then a fast block nonlinear Bregman-Kaczmarz method with averaging is presented for solving the nonlinear problem \eqref{eq:bregnonlinearopti}. 
Moreover, the convergence property of the averaging block nonlinear Kaczmarz method is analyzed on Bregman projections.

Let $\varphi:\mathbb{R}^n \to (-\infty,+\infty]$ be a convex function and its effective domain be 
$\text{dom }\varphi = \{x\in\mathbb{R}^n: \varphi(x)<\infty\} \neq\emptyset. $
Assuming that $\varphi$ is lower semicontinuous with
$\varphi(x) \leq \lim\inf\limits_{y\to x} \varphi(y)$  for all $x\in\mathbb{R}^n$, and is supercoercive, that is
$ \lim\limits_{\|x\|_2\to\infty} \frac{\varphi(x)}{\|x\|_2} = +\infty. $

\begin{definition}\rm
The subdifferential of $\varphi$ at $x\in\text{dom }\varphi$ is defined as 
$$
\partial\varphi(x)=\big\{x^*\in\mathbb{R}^n: \varphi(y) \geq \varphi(x) + \langle x^*,y-x\rangle, \quad y\in \text{dom }\varphi\big\},
$$
where $x^*\in\partial\varphi(x)$ represents a subgradient of $\varphi$ at $x$.
\end{definition}

\begin{definition}\rm
The convex conjugate of $\varphi$ is defined as
$$
\varphi^*(x^*) = \sup_{x\in\mathbb{R}^n} \langle x^*,x\rangle - \varphi(x), \qquad x^*\in\mathbb{R}^n, 
$$
where the function $\varphi^*$ is convex and lower semicontinuous.
\end{definition}

\begin{definition}\label{def:bregdist} \rm
The Bregman distance \cite{2019FD} between $x$ and $y$ with respect to $\varphi$ and a subgradient $x^*\in\partial \varphi(x)$ is defined as
$$
D_\varphi^{x^*}(x,y) = \varphi(y) - \varphi(x) - \langle x^*,y-x\rangle, \qquad x,y\in\text{dom }\varphi.
$$
\end{definition}
Using Fenchel's equality $\varphi^*(x^*)=\langle x^*,x\rangle - \varphi(x)$ for $x^*\in\partial\varphi(x)$,
the Bregman distance with the conjugate function can be rewritten as
 $
D_\varphi^{x^*}(x,y) = \varphi^*(x^*)-\langle x^*,y\rangle + \varphi(y).
 $
If $\varphi$ is differentiable at $x$, then the subdifferential $\partial\varphi(x)$ contains the single element $\nabla\varphi(x)$ and it yields that
$
 D_\varphi(x,y) = D^{\nabla\varphi(x)}_\varphi(x,y) = \varphi(y) - \varphi(x) - \langle \nabla\varphi(x),y-x\rangle. 
$
\begin{definition}\rm \label{def:eucbregman}
The function $\varphi$ is called $\sigma$-{strongly} convex with respect to a norm $\|\cdot\|_2$ for some $\sigma>0$, if for all $x,y\in\text{dom } \partial\varphi$ it satisfies that $\frac{\sigma}{2}\|x-y\|_2^2\leq D_\varphi^{x^*}(x,y)$. 
\end{definition}

\begin{definition}\rm\textbf{({Bregman projection} \cite{2019FD}).} \label{defbrgproj}
 Let $C \subset \mathbb{R}^n$ be a nonempty convex set, $C \cap \text{dom }\varphi\neq\emptyset$, $x\in\text{dom }\partial\varphi$ and $x^*\in\partial\varphi(x)$. The Bregman projection of $x$ onto $C$ with respect to $\varphi$ and $x^*$ is the point $\Pi_C^{x^*}(x)\in C \cap \text{dom }\varphi$ such that 
	\begin{equation*}
			 \Pi_C^{x^*}(x)= \arg\min_{y\in C} D_\varphi^{x^*}(x,y).
	\end{equation*}
\end{definition} 

Considering the Bregman projections onto hyperplanes 
	$$ H(\gamma,\beta):= \{x\in\mathbb{R}^n: \langle \gamma,x\rangle = \beta\}, \qquad \gamma\in\mathbb{R}^n, \ \beta\in\mathbb{R}, $$
	and halfspaces 
	$$ H^{\leq(\geq)}(\gamma,\beta):= \{x\in\mathbb{R}^n: \langle \gamma,x\rangle \leq(\geq) \beta\}, \qquad \gamma\in\mathbb{R}^n, \ \beta\in\mathbb{R}.$$

Given an appropriate convex function $\varphi\colon\mathbb{R}^n\to \mathbb{R} \cup \{+\infty\}$ with 
$\overline{\text{dom }\partial\varphi}= C$.
The nonlinear Bregman-Kaczmarz method \cite{2024GLW} was proposed for solving \eqref{eqn:NBKproblem} by calculating the Bregman projection with respect to $\varphi$ onto the solution set of  the local linearization of a component of equation $F_{i_k}(x)=0$ at the current iterate $x_k$, where the index $i_k$ is uniform randomly chosen from $i_k\in 
\{1,\ldots, n\}$. That is to obtain $x_{k+1}$ by solving the constrained optimization problem
\begin{equation} \label{eq:ourmethodintro}
	x_{k+1} =  \mathop{\mathrm{argmin}}_{x\in\mathbb{R}^n} D_\varphi^{x_k^*}(x_k,x), \qquad \text{s.t. } x\in H_k,
	\end{equation}
with
\begin{equation*}
H_k:= \{x\in\mathbb{R}^n: F_{i_k}(x_k) + \langle \nabla F_{i_k}(x_k), x-x_k\rangle = 0\}= H(\nabla F_{i_k}(x_k), \beta_k),
\end{equation*} 
where
\begin{equation}\label{eqn:beta_k}
    \beta_k = \langle \nabla F_{i_k}(x_k),x_k\rangle - F_{i_k}(x_k).
 \end{equation} 
The convergence properties of the nonlinear Bregman-Kaczmarz method in a classical setting of nonlinearity are restated in Theorem \ref{thm:convtccNBK}.

\begin{theorem}\rm 
	\label{thm:convtccNBK}
Let \cite[Assumption 1]{2024GLW} hold true, $\varphi$ be $\sigma$-strongly convex and $M$-smooth. Let each function $F_i$ satisfies the local tangential cone condition with some $\eta\in (0,\frac{\sigma}{2M})$, $\hat x\in S$ and $x_0\in B_{r,\varphi}(\hat x)$. Moreover, assume that the Jacobian $F'(x)$ has full column rank for all $x\in B_{r,\varphi}(\hat x)$ and $p_{\min}=\min\limits_{i=1,...,n} p_i>0$. 
Then, the iterates $x_k$ generated by the nonlinear Bregman-Kaczmarz method fulfill that 
   \begin{equation} \label{eqn:tcc_linear_Alg2}
	\frac{\sigma}{2}\mathbb{E}\big[\|x_k-\hat x\|_2^2] 
	   \leq \mathbb{E}\big[D_\varphi^{x_{k}^*}(x_{k},\hat x)\big] 
	\leq \Big( 1- \frac{ \sigma\big(\frac12-\eta\frac{M}{\sigma}\big)p_{\min}}{(1+\eta)^2\kappa_{\min}^2} \Big)^k \mathbb{E}\big[D_\varphi^{x_0^*}(x_0,\hat x)\big],
  \end{equation} 
where $\kappa_{\min}= \min\limits_{x\in B_{r,\varphi}(\hat x)} \min\limits_{\|y\|_2=1} \frac{\|F'(x)\|_F}{ \|F'(x)y\|_2}$ and $\|\cdot\|_F$ is the Frobenius norm.	
\end{theorem}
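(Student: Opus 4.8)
The plan is to reproduce the Bregman-distance potential argument of \cite{2024GLW}: track the sequence $D_k:=D_\varphi^{x_k^*}(x_k,\hat x)$ and show it contracts in conditional expectation with ratio equal to the bracket in \eqref{eqn:tcc_linear_Alg2}. The first step is to put one iteration of \eqref{eq:ourmethodintro} in dual form. Since $\varphi$ is strongly convex and supercoercive, $\varphi^*$ is differentiable and the Bregman projection onto $H_k=H(\nabla F_{i_k}(x_k),\beta_k)$ is realised by $x_{k+1}^*=x_k^*-t_k\nabla F_{i_k}(x_k)$, $x_{k+1}=\nabla\varphi^*(x_{k+1}^*)$, where $t_k\in\mathbb{R}$ is the unique minimiser of the scalar convex function $g_k(t):=\varphi^*\big(x_k^*-t\nabla F_{i_k}(x_k)\big)+t\beta_k$ (so $\langle\nabla F_{i_k}(x_k),x_{k+1}\rangle=\beta_k$ and $g_k'(0)=-F_{i_k}(x_k)$). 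Using Fenchel's equality and the representation $D_\varphi^{x^*}(x,y)=\varphi^*(x^*)-\langle x^*,y\rangle+\varphi(y)$ from Definition~\ref{def:bregdist}, a short computation gives the one-step identity
$$D_{k+1}=D_k-D_\varphi^{x_k^*}(x_k,x_{k+1})-t_k\big(\beta_k-\langle\nabla F_{i_k}(x_k),\hat x\rangle\big).$$
Here the nonlinearity is absorbed by the local tangential cone condition: applying it to $F_{i_k}$ at the pair $(x_k,\hat x)$, using $F_{i_k}(\hat x)=0$ and \eqref{eqn:beta_k}, shows that the linearisation defect $\delta_k:=\beta_k-\langle\nabla F_{i_k}(x_k),\hat x\rangle$ obeys $|\delta_k|\le\eta\,|F_{i_k}(x_k)|$.

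Next I would bound the Bregman gain and the stepsize in terms of $F_{i_k}(x_k)$ and $\|\nabla F_{i_k}(x_k)\|_2$. For the gain, $\sigma$-strong convexity of $\varphi$ (Definition~\ref{def:eucbregman}), the Cauchy--Schwarz inequality, and the identity $\langle\nabla F_{i_k}(x_k),x_k-x_{k+1}\rangle=F_{i_k}(x_k)$ (valid because $x_{k+1}\in H_k$) give
$$D_\varphi^{x_k^*}(x_k,x_{k+1})\ \ge\ \frac{\sigma}{2}\|x_k-x_{k+1}\|_2^2\ \ge\ \frac{\sigma}{2}\,\frac{F_{i_k}(x_k)^2}{\|\nabla F_{i_k}(x_k)\|_2^2}.$$
For the stepsize, $M$-smoothness of $\varphi$ makes $\varphi^*$ $\tfrac1M$-strongly convex, so $g_k''\ge\|\nabla F_{i_k}(x_k)\|_2^2/M$; comparing $g_k'(0)=-F_{i_k}(x_k)$ with $g_k'(t_k)=0$ yields $|t_k|\le M\,|F_{i_k}(x_k)|/\|\nabla F_{i_k}(x_k)\|_2^2$. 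Substituting these two bounds and the defect bound into the one-step identity gives
$$D_{k+1}\ \le\ D_k-\Big(\frac{\sigma}{2}-\eta M\Big)\frac{F_{i_k}(x_k)^2}{\|\nabla F_{i_k}(x_k)\|_2^2},$$
and $\tfrac{\sigma}{2}-\eta M=\sigma\big(\tfrac12-\eta\tfrac{M}{\sigma}\big)>0$ exactly because $\eta<\tfrac{\sigma}{2M}$. In particular $D_{k+1}\le D_k$ holds deterministically, so by induction on $k$ (using $x_0\in B_{r,\varphi}(\hat x)$) every iterate stays in $B_{r,\varphi}(\hat x)$, which is what keeps the local tangential cone condition and the full-column-rank hypothesis in force along the whole run.

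Finally I would convert the residual term into a fraction of $D_k$. Using the tangential cone condition once more, $|F_{i_k}(x_k)|\ge(1+\eta)^{-1}|\langle\nabla F_{i_k}(x_k),x_k-\hat x\rangle|$, hence $F_{i_k}(x_k)^2/\|\nabla F_{i_k}(x_k)\|_2^2\ge(1+\eta)^{-2}\langle\nabla F_{i_k}(x_k),x_k-\hat x\rangle^2/\|\nabla F_{i_k}(x_k)\|_2^2$. Taking the conditional expectation over $i_k$, bounding $p_i\ge p_{\min}$, invoking the full column rank of $F'(x_k)$ and the definition of $\kappa_{\min}$ (bounding the row norms $\|\nabla F_i(x_k)\|_2$ by $\|F'(x_k)\|_F$ and the quantity $\|F'(x_k)(x_k-\hat x)\|_2$ from below via the column rank), and then relating $\|x_k-\hat x\|_2^2$ to $D_\varphi^{x_k^*}(x_k,\hat x)$ through the smoothness/strong-convexity of $\varphi$, one obtains
$$\mathbb{E}\big[D_{k+1}\mid x_k\big]\ \le\ \Big(1-\frac{\sigma\big(\tfrac12-\eta\tfrac{M}{\sigma}\big)p_{\min}}{(1+\eta)^2\kappa_{\min}^2}\Big)D_k.$$
Taking total expectations and iterating yields the middle inequality of \eqref{eqn:tcc_linear_Alg2}, while the left inequality is immediate from $\tfrac{\sigma}{2}\|x_k-\hat x\|_2^2\le D_\varphi^{x_k^*}(x_k,\hat x)$ in Definition~\ref{def:eucbregman}. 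I expect the main obstacle to be the handling of the linearisation defect: because $\hat x$ does not lie on the surrogate hyperplane $H_k$, the tangential cone condition must be invoked twice --- once to control $\delta_k$ in the one-step identity and once to relate the local residual $F_{i_k}(x_k)$ to the true error $x_k-\hat x$ --- and its constant has to be threaded through both steps, which is precisely what forces the threshold $\eta<\sigma/(2M)$; a smaller but genuine technical point is establishing the invariance of $B_{r,\varphi}(\hat x)$ so that all local hypotheses remain valid throughout.
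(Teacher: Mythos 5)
The paper itself offers no proof of Theorem \ref{thm:convtccNBK}: it is explicitly \emph{restated} from \cite{2024GLW}, so your attempt can only be compared with the argument of that reference. On that score your proposal is essentially the canonical proof and its structure is sound: the dual one-step identity for the Bregman projection onto $H_k$ (which you derive correctly via Fenchel's equality and $\langle\nabla F_{i_k}(x_k),x_{k+1}\rangle=\beta_k$), the lower bound on the Bregman gain from $\sigma$-strong convexity together with $\langle\nabla F_{i_k}(x_k),x_k-x_{k+1}\rangle=F_{i_k}(x_k)$, the bound $|t_k|\le M|F_{i_k}(x_k)|/\|\nabla F_{i_k}(x_k)\|_2^2$ from $\tfrac1M$-strong convexity of $\varphi^*$, the two separate invocations of the tangential cone condition (one for the defect $\delta_k$, as in \eqref{eq:lemftaun1}, and one to pass from $F_{i_k}(x_k)$ to $F'(x_k)(x_k-\hat x)$, as in \eqref{eq:lemftaun2}), and the monotone decrease of $D_k$ to keep all iterates in $B_{r,\varphi}(\hat x)$. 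Each of these steps checks out, and the threshold $\eta<\sigma/(2M)$ emerges exactly where you say it does.

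The one point that does not close is the final bookkeeping of constants. To convert the residual decrement into a fraction of $D_k$ you must pass from $\|x_k-\hat x\|_2^2$ to $D_\varphi^{x_k^*}(x_k,\hat x)$ in the direction $\|x_k-\hat x\|_2^2\ge\tfrac{2}{M}D_\varphi^{x_k^*}(x_k,\hat x)$, which is the only inequality available (strong convexity goes the other way). Carrying this through, your chain yields the contraction factor $1-\tfrac{2}{M}\cdot\sigma\big(\tfrac12-\eta\tfrac M\sigma\big)\tfrac{p_{\min}}{(1+\eta)^2\kappa^2}$, i.e.\ an extra $2/M$ relative to the display \eqref{eqn:tcc_linear_Alg2}. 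This is consistent with the paper's own Theorems \ref{thmcstABNBK} and \ref{thm:conv_adpMRABNBK}, where $M$ does appear in the denominator of the rate, and it strongly suggests that the restated constant (and likewise the double minimum in the definition of $\kappa_{\min}$, which as written equals $\min_x\|F'(x)\|_F/\sigma_{\max}(F'(x))$ rather than the worst-case quantity $\max_x\|F'(x)\|_F/\sigma_{\min}(F'(x))$ that your argument actually requires) is a transcription slip rather than a flaw in your reasoning. Still, as written your derivation does not land on the stated constant; you should either track the $2/M$ explicitly in the final rate or state clearly that you are proving the bound up to this normalization.
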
 
Note that the averaging acceleration strategies \cite{2001CGG} are widely used in iterative methods, such as stochastic gradient descent \cite{2013SZ,2018LZZL}, coordinate descent methods \cite{2013N,2023SD} and Kaczmarz methods \cite{2019N,2020DSS,2023XYZ}.

In this work, to accelerate the convergence of the nonlinear Bregman-Kaczmarz method,  the averaging block technique are introduced.
Specifically, for each selected block index set $\mathcal{I}_k\subset\{1,\cdots,m\}$, the block nonlinear Bregman-Kaczmarz method iterates by
\begin{equation} \label{eqiterabnk}
\begin{aligned}
 x_{k+1}^*&= x_k^*- \alpha_k\left( \sum\limits_{i\in\mathcal{I}_k}\omega_k^{(i)} \frac{\sigma F_i(x_k)}{\left\|\nabla F_i(x_k) \right\| ^2_2} \nabla F_i(x_k) \right), \\
x_{k+1}& = \nabla \varphi^*(x_{k+1}^*),
\end{aligned}
\end{equation} 
where the weights satisfy $\omega_k^{(i)}\in[0,1]$ such that $\sum\limits_{i\in \mathcal{I}_k}\omega_k^{(i)} =1$ and $\alpha_k\in(0,2)$ is the stepsize.
Then, a class of averaging block nonlinear Bregman-Kaczmarz methods is developed in Algorithm \ref{alg:ABNBK}.
 
\begin{algorithm}[!htbp]
  \caption{Averaging block nonlinear Bregman-Kaczmarz method}
 \label{alg:ABNBK}
		\begin{algorithmic}[1]
		\Require $x_{0}^*\in \mathbb{R}^n, x_0 = \nabla \varphi^*(x_0^*), r_0= -F(x_0)$, $\sigma>0$ and $\theta\in(0,1]$. 
        \Ensure $x_\ell$.
			\For{$k=0,1,\cdots,\ell-1$}
			 \State Select the index set 
   $\mathcal{I}_k\subset\{1,\cdots,m\}$.
			 \State Update $x_{k+1}^*= x_k^*- \alpha_k\left( \sum\limits_{i\in\mathcal{I}_k}\omega_k^{(i)}\frac{\sigma F_i(x_k)}{\left\|\nabla F_i(x_k) \right\| ^2_2} \nabla F_i(x_k)  \right)$.
         	\State update $x_{k+1} = \nabla \varphi^*(x_{k+1}^*)$.
     \State Compute residual $r_{k+1}= -F(x_{k+1})$.
			\EndFor
		\end{algorithmic} 
\end{algorithm}	
Note that when $\theta=1$ and $\alpha_k=1$, Algorithm \ref{alg:ABNBK} method covers the maximum residual nonlinear Bregman-Kaczmarz method \cite{2023ZWZ}.
Note that when the function $\varphi(x) = \frac12\|x\|_2^2$,  Algorithm \ref{alg:ABNBK} method covers the averaging block nonlinear Kaczmarz method \cite{2024XYb}.

In Algorithm \ref{alg:ABNBK}, a simple choice for the weights is
$\omega_k^{(i)}=  {\left\|\nabla F_i(x_k) \right\|_2^2 }/\sum_{i\in\mathcal{I}_k} {\left\|\nabla F_i(x_k) \right\|_2^2}\in(0,1)$ for all $k\geq 0$ and then we can obtain the following update 
\begin{equation}\label{eqspupdate} 
  x_{k+1}^*= x_k^* - \alpha_k\frac{\sigma(F'_{\mathcal{I}_k}(x_k))^T F_{\mathcal{I}_k}(x_k)}{\left\|F'_{\mathcal{I}_k}(x_k) \right\|^2_2} \text{ and } x_{k+1} = \nabla \varphi^*(x_{k+1}^*).
\end{equation} 
For the stepsize $\alpha_k$, we mainly focus on two choices:
\begin{itemize}
  \item constant stepsize, that is, for any $k>0$, $\alpha_k$ is equal to a constant $\alpha>0$.
 \item adaptive stepsize, which is based on the idea of extrapolation
\begin{equation}\label{eq:abnbkadpsize}
\alpha_k= \delta \frac{\sum_{i\in\mathcal{I}_k}\hat{\omega}_k^{(i)}(F_i(x_k))^2}{\left\|\sum_{i\in\mathcal{I}_k}\hat{\omega}_k^{(i)}F_i(x_k)(\nabla F_i(x_k))^T \right\|_2^2},\quad k\geq 0,
\end{equation}
where $\hat{\omega}_k^{(i)}=\omega_k^{(i)}/\|\nabla F_i(x_k) \|_2^2$ and $\delta\in[1,2)$.
This extrapolated stepsize is also introduced for convex feasibility problems\cite{2019NRP} and for linear systems of equations\cite{2019N}.
\end{itemize}
Moreover, we consider the set $\mathcal{I}_k$ to be determined by the following greedy selection rule
\begin{equation}\label{eqabnkset}
 \mathcal{I}_k=\{ i_k| |r_k^{(i_k)}|^2\geq \theta\max\limits_{1\leq i\leq m} |r_k^{(i)}|^2 \}, \quad\theta\in(0,1].
\end{equation}
For more related work on greedy strategies, we refer the reader to \cite{2018BW,2020NZ,2023XYZ,2024YY}.

\section{ Convergence analysis }\label{sec:convanal}
First, some lemmas are introduced, which is crucial for the following convergence analysis of the averaging block nonlinear Bregman-Kaczmarz method.

 \begin{lemma}\rm
	\label{lem:normwithbregman}
		If $\varphi\colon\mathbb{R}^n\to\mathbb{R}$ is proper, convex and lower semicontinuous, then the following statements are equivalent
		\begin{enumerate}[(i)]
			\item $\varphi$ is $\sigma$-strongly convex with respect to $\|\cdot\|_2$. 
			\item $ \langle x^*-y^*, x-y\rangle \geq \sigma\|x-y\|_2^2$,  for all $x,y\in\mathbb{R}^n$ and $x^*\in\partial\varphi(x)$, $y^*\in\partial\varphi(y)$.
			\item The function $\varphi^*$ is $\tfrac{1}{\sigma}$-smooth with respect to $\|\cdot\|_2$.
		\end{enumerate}
\end{lemma}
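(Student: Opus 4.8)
The plan is to prove the cycle of equivalences by establishing $(i)\Leftrightarrow(ii)$ directly from the definitions of the Bregman distance and the subdifferential, and $(ii)\Leftrightarrow(iii)$ by passing to the conjugate via Fenchel's equality and invoking the Baillon--Haddad characterization of Lipschitz gradients (equivalently, the classical duality between strong convexity of $\varphi$ and smoothness of $\varphi^*$, see e.g.\ \cite{2019FD}).

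For $(i)\Rightarrow(ii)$ I would fix $x,y$ and subgradients $x^*\in\partial\varphi(x)$, $y^*\in\partial\varphi(y)$, write the two inequalities $D_\varphi^{x^*}(x,y)\geq\frac{\sigma}{2}\|x-y\|_2^2$ and $D_\varphi^{y^*}(y,x)\geq\frac{\sigma}{2}\|x-y\|_2^2$ from Definition \ref{def:eucbregman}, and add them; by Definition \ref{def:bregdist} the values $\varphi(x),\varphi(y)$ cancel and the sum of the left-hand sides is exactly $\langle x^*-y^*,x-y\rangle$, which is $(ii)$. For $(ii)\Rightarrow(i)$ I would use that a finite convex function on $\mathbb{R}^n$ is locally Lipschitz, so $t\mapsto h(t):=\varphi(x+t(y-x))$ is absolutely continuous on $[0,1]$ and $h(1)-h(0)=\int_0^1\langle g_t,y-x\rangle\,dt$ for a measurable selection $g_t\in\partial\varphi(x_t)$ with $x_t:=x+t(y-x)$. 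Subtracting $\langle x^*,y-x\rangle$, rewriting $y-x=\tfrac1t(x_t-x)$, and applying $(ii)$ to the pair $x^*,g_t$ gives $\langle g_t-x^*,x_t-x\rangle\geq\sigma t^2\|y-x\|_2^2$; integrating $\int_0^1 t\,dt=\tfrac12$ recovers $D_\varphi^{x^*}(x,y)\geq\frac{\sigma}{2}\|x-y\|_2^2$, i.e.\ $(i)$.

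For $(ii)\Leftrightarrow(iii)$ the starting point is that $\varphi$ proper, convex and lower semicontinuous gives $\varphi^{**}=\varphi$ and the reciprocity $x^*\in\partial\varphi(x)\iff x\in\partial\varphi^*(x^*)$. Assuming $(ii)$, the choice $y^*=x^*$ forces $\partial\varphi^*(x^*)$ to be a singleton for each $x^*$; together with $\varphi$ being finite on $\mathbb{R}^n$ (so $\partial\varphi$ is strongly monotone, hence onto $\mathbb{R}^n$), this makes $\varphi^*$ differentiable on all of $\mathbb{R}^n$ with $x=\nabla\varphi^*(x^*)$, $y=\nabla\varphi^*(y^*)$, and $(ii)$ becomes precisely the cocoercivity estimate $\langle x^*-y^*,\nabla\varphi^*(x^*)-\nabla\varphi^*(y^*)\rangle\geq\sigma\|\nabla\varphi^*(x^*)-\nabla\varphi^*(y^*)\|_2^2$, which by Baillon--Haddad is equivalent to $\nabla\varphi^*$ being $\tfrac1\sigma$-Lipschitz, that is $(iii)$. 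Conversely, $(iii)$ gives cocoercivity of $\nabla\varphi^*$ (Baillon--Haddad again, or the quadratic-upper-bound argument), and translating back through $x=\nabla\varphi^*(x^*)\in(\partial\varphi)^{-1}(x^*)$ and $y=\nabla\varphi^*(y^*)\in(\partial\varphi)^{-1}(y^*)$ returns $(ii)$.

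The step I expect to be the main obstacle is the bookkeeping in $(ii)\Leftrightarrow(iii)$: one must justify that $\varphi^*$ is genuinely differentiable everywhere before writing $\nabla\varphi^*$, which needs both the single-valuedness of $\partial\varphi^*$ extracted from $(ii)$ and the full domain of $\varphi^*$ (guaranteed here because $\varphi$ is supercoercive, equivalently because a strongly monotone maximal monotone operator on $\mathbb{R}^n$ is surjective). The analytic step in $(ii)\Rightarrow(i)$ — absolute continuity of the one-dimensional restriction and differentiating it along a subgradient selection — is routine but should be stated with care; everything else reduces to Fenchel's equality and Cauchy--Schwarz.
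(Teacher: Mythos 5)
The paper itself offers no proof of Lemma \ref{lem:normwithbregman}; it is stated as a known convex-analysis fact (of the kind found in the cited Bregman-projection literature), so there is nothing to compare line by line. Your argument is a correct, self-contained proof of the standard result, and it is the canonical one: for $(i)\Rightarrow(ii)$ the two Bregman inequalities do add up exactly to $\langle x^*-y^*,x-y\rangle$ because the function values cancel; for $(ii)\Rightarrow(i)$ the one-dimensional restriction $h(t)=\varphi(x+t(y-x))$ is finite convex, hence locally Lipschitz and absolutely continuous, and at every point of differentiability \emph{every} subgradient $g_t\in\partial\varphi(x_t)$ satisfies $h'(t)=\langle g_t,y-x\rangle$ (since $\langle g_t,y-x\rangle\in\partial h(t)$), which legitimizes the selection and the integration $\int_0^1 \sigma t\,\|y-x\|_2^2\,dt=\tfrac{\sigma}{2}\|y-x\|_2^2$; and for $(ii)\Leftrightarrow(iii)$ you correctly route through the reciprocity $x^*\in\partial\varphi(x)\iff x\in\partial\varphi^*(x^*)$, note that strong monotonicity forces $\partial\varphi^*$ to be single-valued while strong convexity (equivalently supercoercivity plus Minty surjectivity of the strongly monotone maximal operator) gives $\operatorname{dom}\varphi^*=\mathbb{R}^n$, obtain Lipschitz continuity of $\nabla\varphi^*$ from cocoercivity by Cauchy--Schwarz, and invoke Baillon--Haddad only for the genuinely nontrivial converse direction $(iii)\Rightarrow(ii)$. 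The only presentational caveat is that you lean on two external results (Minty's surjectivity theorem and Baillon--Haddad) that the paper never states; if this lemma were to carry a written proof in the paper, those would need explicit citations, but as a mathematical argument there is no gap.
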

  
\begin{lemma}{\em \cite{2024ZLT}}\label{lemftau}
\rm If the function $F$ satisfies the local tangential cone condition, then for any $x_1, x_2\in \mathbb{R}^n$ and an index subset $\mathcal{I}\subset \left\{1,2,\cdots, m \right\}$, it holds that
\begin{equation}\label{eq:lemftaun1}
\left\| F_{\mathcal{I}}(x_1)- F_{\mathcal{I}}(x_2)- F'_{\mathcal{I}}(x_1)(x_1-x_2) \right\|^2_2 \leq \eta^2  \left\|F_{\mathcal{I}}(x_1)- F_{\mathcal{I}}(x_2)\right\|^2_2 
\end{equation}
and
\begin{equation}\label{eq:lemftaun2}
\left\|F_{\mathcal{I}}(x_1)- F_{\mathcal{I}}(x_2)  \right\|_2^2\geq \frac{1}{(1+\eta)^2}\left\| F'_{\mathcal{I}}(x_1)(x_1- x_2) \right\|_2^2.
\end{equation}
\end{lemma}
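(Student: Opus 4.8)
The plan is to reduce both inequalities to the component-wise form of the local tangential cone condition and then to combine the pieces with the triangle inequality in $\mathbb{R}^{|\mathcal{I}|}$. Recall that the condition supplies, for each component $F_i$, a constant $\eta$ independent of $i$ with $|F_i(x_1)-F_i(x_2)-\nabla F_i(x_1)^T(x_1-x_2)|\le \eta\,|F_i(x_1)-F_i(x_2)|$ for the points under consideration. First I would square this scalar estimate and sum it over $i\in\mathcal{I}$. Since stacking the rows $\nabla F_i(x_1)^T$, $i\in\mathcal{I}$, produces precisely $F'_{\mathcal{I}}(x_1)$, the sum of the left-hand sides is $\left\|F_{\mathcal{I}}(x_1)-F_{\mathcal{I}}(x_2)-F'_{\mathcal{I}}(x_1)(x_1-x_2)\right\|_2^2$, and the sum of the right-hand sides is $\eta^2\left\|F_{\mathcal{I}}(x_1)-F_{\mathcal{I}}(x_2)\right\|_2^2$; this is exactly \eqref{eq:lemftaun1}.

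For \eqref{eq:lemftaun2} I would start from the triangle inequality in the Euclidean norm,
$$
\left\|F'_{\mathcal{I}}(x_1)(x_1-x_2)\right\|_2 \le \left\|F_{\mathcal{I}}(x_1)-F_{\mathcal{I}}(x_2)\right\|_2 + \left\|F_{\mathcal{I}}(x_1)-F_{\mathcal{I}}(x_2)-F'_{\mathcal{I}}(x_1)(x_1-x_2)\right\|_2,
$$
and then bound the last term by $\eta\left\|F_{\mathcal{I}}(x_1)-F_{\mathcal{I}}(x_2)\right\|_2$ using \eqref{eq:lemftaun1}. This gives $\left\|F'_{\mathcal{I}}(x_1)(x_1-x_2)\right\|_2 \le (1+\eta)\left\|F_{\mathcal{I}}(x_1)-F_{\mathcal{I}}(x_2)\right\|_2$; squaring and dividing by $(1+\eta)^2$ yields the claim.

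The argument is essentially mechanical, so there is no real obstacle; the only point requiring attention is that the tangential cone condition be used in its component-wise (rather than full-system) form with a constant $\eta$ uniform over $i$, which is what allows the restriction to an arbitrary index subset $\mathcal{I}$. Since the lemma is quoted from \cite{2024ZLT}, this is the version in force, and beyond correctly identifying $F'_{\mathcal{I}}$ with the submatrix of rows indexed by $\mathcal{I}$ (and, implicitly, restricting $x_1,x_2$ to the region where the condition holds), nothing further is needed.
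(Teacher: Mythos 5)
Your argument is correct. The paper states Lemma \ref{lemftau} without proof, citing \cite{2024ZLT}, and your derivation --- squaring and summing the componentwise tangential cone estimates over $i\in\mathcal{I}$ to obtain \eqref{eq:lemftaun1}, then combining the triangle inequality with \eqref{eq:lemftaun1} and squaring to obtain \eqref{eq:lemftaun2} --- is exactly the standard argument behind the cited result, including the correct observation that the bound restricts to an arbitrary index subset only because the constant $\eta$ is uniform over the components.
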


\begin{lemma}\label{lem:matsingular}
\rm Let $A\in \mathbb{R}^{m\times n}$ be any nonzero real matrix. For every vector $u\in  {\rm range}(A)$,  
\begin{equation*}
  \sigma_{\min}^2(A) \left\|u \right\|_2^2\leq \left\|A^T u \right\|^2_2 \leq \sigma_{\max}^2(A) \left\|u \right\|_2^2,
\end{equation*}
where ${\rm range}(A)$, $\sigma_{\min}(A)$  and $\sigma_{\max}(A)$ are the column space, the nonzero minimum and maximum singular values of $A$, respectively.
\end{lemma}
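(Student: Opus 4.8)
The plan is to reduce everything to the singular value decomposition of $A$ and to an expansion of $u$ in the basis of left singular vectors. Write $A=U\Sigma V^T$ with $U=[u_1,\dots,u_m]$ and $V=[v_1,\dots,v_n]$ orthogonal, and with $\Sigma$ carrying the singular values $\sigma_1\geq\sigma_2\geq\cdots\geq\sigma_r>0$ on its diagonal, where $r=\rank(A)\geq 1$ since $A$ is nonzero; thus $\sigma_{\max}(A)=\sigma_1$ and $\sigma_{\min}(A)=\sigma_r$. The structural fact I would invoke is $\rspace(A)=\Kryspan\{u_1,\dots,u_r\}$, so any $u\in\rspace(A)$ can be written as $u=\sum_{i=1}^{r}c_i u_i$, and by orthonormality of the $u_i$ we get $\|u\|_2^2=\sum_{i=1}^{r}c_i^2$.

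Next I would compute $A^T u$ explicitly. Since $A^T=V\Sigma^T U^T$ and $U^T u=(c_1,\dots,c_r,0,\dots,0)^T$, applying $\Sigma^T$ and then $V$ yields $A^T u=\sum_{i=1}^{r}\sigma_i c_i v_i$, hence $\|A^T u\|_2^2=\sum_{i=1}^{r}\sigma_i^2 c_i^2$ by orthonormality of the $v_i$. Bounding each factor $\sigma_i$ with $\sigma_r\leq\sigma_i\leq\sigma_1$ for $i\in\{1,\dots,r\}$ then gives
\[
\sigma_r^2\sum_{i=1}^{r}c_i^2\;\leq\;\sum_{i=1}^{r}\sigma_i^2 c_i^2\;\leq\;\sigma_1^2\sum_{i=1}^{r}c_i^2,
\]
which, combined with the two identities above, is exactly the claimed chain of inequalities.

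I do not expect a genuine obstacle here; the one point requiring care is that the hypothesis $u\in\rspace(A)$ is indispensable for the lower bound. Any component of $u$ lying in $\rspace(A)^\perp=\zspace(A^T)$ would be annihilated by $A^T$ and break the left inequality, so the argument must genuinely use this assumption — which it does, precisely through the expansion of $u$ over only the first $r$ left singular vectors. Alternatively, one could phrase the same computation via the Rayleigh quotient of $AA^T$ restricted to $\rspace(A)=\rspace(AA^T)$, but the SVD route keeps the bookkeeping most transparent.
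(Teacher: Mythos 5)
Your proof is correct and complete. The paper states this lemma without giving a proof of its own, so there is no argument to compare against; your SVD computation --- expanding $u\in\mathrm{range}(A)$ over the first $r$ left singular vectors so that $\|A^Tu\|_2^2=\sum_{i=1}^{r}\sigma_i^2c_i^2$ and then bounding each $\sigma_i$ between $\sigma_r$ and $\sigma_1$ --- is the standard route, and you correctly pinpoint that the hypothesis $u\in\mathrm{range}(A)$ is precisely what excludes components in the kernel of $A^T$ and makes the lower bound with the smallest \emph{nonzero} singular value valid.
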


 \begin{lemma}\label{lem:bregrecursion}
 \rm  If $\varphi$ is a $\sigma$-strongly convex function, from the definition of Bregman distance, it yields the following recursion
    \begin{equation}\label{eqrecurbregiter}
    D_\varphi^{x_{k+1}^*}(x_{k+1},\hat{x})\leq
    D_\varphi^{x_{k}^*}(x_{k},\hat{x})+ \langle x_{k+1}^*-x_{k}^*,x_k-\hat{x}\rangle + \frac{1}{2\sigma} \|x_{k+1}^*-x_{k}^*\|^2_2.
\end{equation}
\begin{proof}
From the Definition \eqref{def:bregdist} and $x_{k}= \nabla \varphi^*(x_{k}^*)$, it follows that 
\begin{equation*}
\begin{aligned}
   & D_\varphi^{x_{k+1}^*}(x_{k+1},\hat{x}) = \varphi^*(x_{k+1}^*) - \langle x_{k+1}^*,\hat{x}\rangle + \varphi(\hat{x})\\
   & \leq \varphi^*(x_{k}^*) + \langle \nabla\varphi^*(x_{k}^*),x_{k+1}^*-x_{k}^*\rangle + \frac{1}{2\sigma} \|x_{k+1}^*-x_{k}^*\|^2_2 - \langle x_{k+1}^*,\hat{x}\rangle + \varphi(\hat{x})\\
    &\leq \varphi^*(x_{k}^*)- \langle x_{k}^*,\hat{x}\rangle + \varphi(\hat{x})
    + \langle x_{k+1}^*-x_{k}^*,x_k-\hat{x}\rangle + \frac{1}{2\sigma} \|x_{k+1}^*-x_{k}^*\|^2_2 \\
   & \leq D_\varphi^{x_{k}^*}(x_{k},\hat{x})+ \langle x_{k+1}^*-x_{k}^*,x_k-\hat{x}\rangle + \frac{1}{2\sigma} \|x_{k+1}^*-x_{k}^*\|^2_2. 
    \end{aligned}    
\end{equation*}
\end{proof}
\end{lemma}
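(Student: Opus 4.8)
The plan is to rewrite the Bregman distance through the convex conjugate $\varphi^*$ and then exploit the smoothness of $\varphi^*$ guaranteed by Lemma \ref{lem:normwithbregman}. By Fenchel's equality $\varphi^*(x^*) = \langle x^*,x\rangle - \varphi(x)$ for $x^*\in\partial\varphi(x)$, the Bregman distance can be written as $D_\varphi^{x^*}(x,y) = \varphi^*(x^*) - \langle x^*,y\rangle + \varphi(y)$; in particular
$$D_\varphi^{x_{k+1}^*}(x_{k+1},\hat x) = \varphi^*(x_{k+1}^*) - \langle x_{k+1}^*,\hat x\rangle + \varphi(\hat x).$$
So the whole inequality reduces to bounding the single term $\varphi^*(x_{k+1}^*)$ in terms of $\varphi^*(x_k^*)$ and the update direction $x_{k+1}^* - x_k^*$.

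Since $\varphi$ is $\sigma$-strongly convex, Lemma \ref{lem:normwithbregman}(iii) gives that $\varphi^*$ is $\tfrac1\sigma$-smooth with respect to $\|\cdot\|_2$, hence obeys the quadratic upper bound
$$\varphi^*(x_{k+1}^*) \leq \varphi^*(x_k^*) + \langle \nabla\varphi^*(x_k^*), x_{k+1}^* - x_k^*\rangle + \frac{1}{2\sigma}\|x_{k+1}^* - x_k^*\|_2^2.$$
I would then substitute this into the expression above for $D_\varphi^{x_{k+1}^*}(x_{k+1},\hat x)$ and use the algorithm's defining relation $x_k = \nabla\varphi^*(x_k^*)$ to replace $\nabla\varphi^*(x_k^*)$ by $x_k$. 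Collecting terms, $\varphi^*(x_k^*) - \langle x_k^*,\hat x\rangle + \varphi(\hat x)$ is exactly $D_\varphi^{x_k^*}(x_k,\hat x)$, and the two linear terms combine via $\langle x_k, x_{k+1}^* - x_k^*\rangle - \langle x_{k+1}^* - x_k^*,\hat x\rangle = \langle x_{k+1}^* - x_k^*, x_k - \hat x\rangle$, which is precisely \eqref{eqrecurbregiter}.

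No step here is a genuine obstacle; the argument is a short chain of identities together with one standard inequality. The only points requiring care are invoking the correct strong-convexity/smoothness duality — so that the smoothness constant of $\varphi^*$ is $\tfrac1\sigma$ and the final coefficient comes out as $\tfrac1{2\sigma}$ rather than something involving the $M$-smoothness constant of $\varphi$ — and keeping the Fenchel identity consistent when passing between $D_\varphi^{x^*}$ and $\varphi^*$.
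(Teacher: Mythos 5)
Your proposal is correct and follows essentially the same route as the paper's proof: rewrite the Bregman distance via Fenchel's equality, apply the $\tfrac{1}{\sigma}$-smoothness of $\varphi^*$ (from Lemma \ref{lem:normwithbregman}(iii)) as a quadratic upper bound on $\varphi^*(x_{k+1}^*)$, substitute $x_k=\nabla\varphi^*(x_k^*)$, and collect the linear terms. No discrepancies to note.
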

Lemma \ref{lem:bregrecursion} gives a elementary recursion about $D_\varphi^{x_{k+1}^*}(x_{k+1},\hat{x})$, which is useful for proving the convergence rate of the averaging block nonlinear Kaczmarz method.
 
\subsection{Averaging block nonlinear Bregman-Kaczmarz methods with constant stepsize}
In this subsection, the averaging block nonlinear Kaczmarz method with constant stepsize $\alpha_k=\alpha$ and weights $\omega_k^{(i)}= {\left\|\nabla F_i(x_k) \right\|_2^2 }/\sum_{i\in\mathcal{I}_k} {\left\|\nabla F_i(x_k) \right\|_2^2}$ is considered and its convergence analysis is given. 
In this case, the update formula of the averaging block nonlinear Kaczmarz method in Algorithm \ref{alg:ABNBK} becomes 
\begin{equation}\label{eq:iterconNBK}
\begin{aligned}
x^*_{k+1}&= x^*_{k}- \alpha\frac{(F'_{\mathcal{I}_k}(x_k))^T F_{\mathcal{I}_k}(x_k) }{\left\|F'_{\mathcal{I}_k}(x_k) \right\|^2_2 }, \\
x_{k+1}& = \nabla \varphi^*(x_{k+1}^*). 
\end{aligned}
\end{equation}

\begin{lemma}\label{lemcstabnbkiter}
 \rm Let $\varphi$ be $\sigma$-strongly convex function. If the function $F$ satisfies the local tangential cone condition, $\eta=\max\limits_i \eta_i< \frac{1}{2}$ and there exists a sparse vector $\hat{x}$ satisfies $F(\hat{x})=0$,  then from the iteration formula \eqref{eq:iterconNBK}, $\alpha\in(0,2(1-\eta))$ and $\mathcal{I}_k\subset \left\{1,2,\cdots, m \right\} $, it satisfies that
\begin{equation}
 D_\varphi^{x_{k+1}^*}(x_{k+1},\hat{x})\leq
    D_\varphi^{x_{k}^*}(x_{k},\hat{x})-\frac{(2(1-\eta)\alpha-\alpha^2)\sigma}{2}\frac{\left\|F_{\mathcal{I}_k}(x_k)\right\|_2^2}{\left\| F'_{\mathcal{I}_k}(x_k)\right\|_2^2}.   
\end{equation}
\end{lemma}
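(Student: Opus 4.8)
The plan is to combine the elementary Bregman recursion of Lemma~\ref{lem:bregrecursion} with the local tangential cone condition. Applying Lemma~\ref{lem:bregrecursion} with the reference point $\hat{x}$ gives
$$D_\varphi^{x_{k+1}^*}(x_{k+1},\hat{x})\leq D_\varphi^{x_{k}^*}(x_{k},\hat{x})+\langle x_{k+1}^*-x_{k}^*,x_k-\hat{x}\rangle+\frac{1}{2\sigma}\|x_{k+1}^*-x_{k}^*\|_2^2,$$
so everything reduces to estimating the two terms on the right. Substituting the dual update from \eqref{eq:iterconNBK}, namely $x_{k+1}^*-x_{k}^*=-\alpha\sigma\,(F'_{\mathcal{I}_k}(x_k))^{T}F_{\mathcal{I}_k}(x_k)/\|F'_{\mathcal{I}_k}(x_k)\|_2^2$, and writing $A:=F'_{\mathcal{I}_k}(x_k)$ and $f:=F_{\mathcal{I}_k}(x_k)$ for brevity, the linear term becomes $-\alpha\sigma\langle f,A(x_k-\hat{x})\rangle/\|A\|_2^2$ and the quadratic term becomes $\alpha^{2}\sigma\|A^{T}f\|_2^2/\bigl(2\|A\|_2^4\bigr)$.

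The main point is the estimate of the linear term. Since $F(\hat{x})=0$ we have $F_{\mathcal{I}_k}(\hat{x})=0$, so Lemma~\ref{lemftau} (inequality \eqref{eq:lemftaun1}) applied with $x_1=x_k$, $x_2=\hat{x}$ and the index set $\mathcal{I}_k$ yields $\|f-A(x_k-\hat{x})\|_2\leq\eta\|f\|_2$. Writing $\langle f,A(x_k-\hat{x})\rangle=\|f\|_2^2-\langle f,\,f-A(x_k-\hat{x})\rangle$ and using the Cauchy--Schwarz inequality, I get $\langle f,A(x_k-\hat{x})\rangle\geq\|f\|_2^2-\eta\|f\|_2^2=(1-\eta)\|f\|_2^2$, hence the linear term is at most $-\alpha\sigma(1-\eta)\|f\|_2^2/\|A\|_2^2$.

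For the quadratic term, the operator-norm bound $\|A^{T}f\|_2\leq\|A\|_2\|f\|_2$ (equivalently Lemma~\ref{lem:matsingular} together with $\sigma_{\max}(A)\leq\|A\|_F$; recall that the denominator $\|A\|_2^2=\sum_{i\in\mathcal{I}_k}\|\nabla F_i(x_k)\|_2^2$ is the squared Frobenius norm) gives $\alpha^{2}\sigma\|A^{T}f\|_2^2/\bigl(2\|A\|_2^4\bigr)\leq\alpha^{2}\sigma\|f\|_2^2/\bigl(2\|A\|_2^2\bigr)$. Adding the two estimates,
$$D_\varphi^{x_{k+1}^*}(x_{k+1},\hat{x})\leq D_\varphi^{x_{k}^*}(x_{k},\hat{x})-\Bigl(\alpha(1-\eta)-\tfrac{\alpha^{2}}{2}\Bigr)\sigma\,\frac{\|F_{\mathcal{I}_k}(x_k)\|_2^2}{\|F'_{\mathcal{I}_k}(x_k)\|_2^2}=D_\varphi^{x_{k}^*}(x_{k},\hat{x})-\frac{(2(1-\eta)\alpha-\alpha^{2})\sigma}{2}\,\frac{\|F_{\mathcal{I}_k}(x_k)\|_2^2}{\|F'_{\mathcal{I}_k}(x_k)\|_2^2},$$
which is the desired inequality. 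No serious obstacle is expected: the one place to be careful is keeping track of the $\sigma$ coming from the update so that the $\tfrac{1}{2\sigma}$ smoothness factor produces a net $\sigma$, and noting that the quadratic-term estimate uses only the operator-norm inequality and does not require $f\in\mathrm{range}(A)$. Finally, the hypotheses $\eta<\tfrac12$ and $\alpha\in(0,2(1-\eta))$ are precisely what make $2(1-\eta)\alpha-\alpha^{2}=\alpha\bigl(2(1-\eta)-\alpha\bigr)>0$, so that the recursion yields a strict decrease of the Bregman distance.
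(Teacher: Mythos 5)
Your argument is correct and follows essentially the same route as the paper: Lemma~\ref{lem:bregrecursion}, substitution of the dual update (with the $\sigma$ factor that the paper itself uses in its proof despite omitting it in the display \eqref{eq:iterconNBK}), the decomposition $\langle f,A(x_k-\hat x)\rangle=\|f\|_2^2-\langle f,\,f-A(x_k-\hat x)\rangle$ combined with Cauchy--Schwarz and \eqref{eq:lemftaun1}, and the bound $\|A^{T}f\|_2\leq\|A\|_2\|f\|_2$ for the quadratic term. Your remarks on the net $\sigma$ bookkeeping and on not needing $f\in\mathrm{range}(A)$ are accurate but do not change the substance.
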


\begin{proof}
Inserting the iterative formula of ABNBK method with constant stepsize into inequality \eqref{eqrecurbregiter}, we have
	\begin{equation*}
	\begin{aligned}
	&D_\varphi^{x_{k+1}^*}(x_{k+1},\hat{x})\leq
	D_\varphi^{x_{k}^*}(x_{k},\hat{x})+ \langle x_{k+1}^*-x_{k}^*,x_k-\hat{x}\rangle + \frac{1}{2\sigma} \|x_{k+1}^*-x_{k}^*\|^2_2\\
	&\leq D_\varphi^{x_{k}^*}(x_{k},\hat{x})
	+  \left \langle -\alpha \frac{\sigma(F'_{\mathcal{I}_k}(x_k))^TF_{\mathcal{I}_k}(x_k)}{\left\|F'_{\mathcal{I}_k}(x_k)\right\|^2_2}, x_{k}-\hat{x} \right \rangle  
	+\frac{1}{2\sigma}\left\|\alpha\frac{\sigma(F'_{\mathcal{I}_k}(x_k))^T F_{\mathcal{I}_k}(x_k)}{\left\|F'_{\mathcal{I}_k}(x_k) \right\|^2_2 }\right\|^2_2 \\
	&\leq D_\varphi^{x_{k}^*}(x_{k},\hat{x})
	-\alpha \frac{ \sigma F^T_{\mathcal{I}_k}(x_k)F'_{\mathcal{I}_k}(x_k)}{\left\|F'_{\mathcal{I}_k}(x_k)\right\|_2^2}(x_k-\hat{x})
	+ \frac{\alpha^2\sigma}{2}\left\|\frac{(F'_{\mathcal{I}_k}(x_k))^TF_{\mathcal{I}_k}(x_k)}{\left\|F'_{\mathcal{I}_k}(x_k)\right\|_2^2} \right\|_2^2\\
	&\leq D_\varphi^{x_{k}^*}(x_{k},\hat{x})
	+\frac{\sigma}{2}\left(\alpha^2\left\|\frac{(F'_{\mathcal{I}_k}(x_k))^TF_{\mathcal{I}_k}(x_k)}{\left\|F'_{\mathcal{I}_k}(x_k)\right\|_2^2} \right\|_2^2 -2\alpha\frac{  F^T_{\mathcal{I}_k}(x_k)F'_{\mathcal{I}_k}(x_k)}{\left\|F'_{\mathcal{I}_k}(x_k)\right\|_2^2}(x_k-\hat{x}) \right)\\
	&\leq D_\varphi^{x_{k}^*}(x_{k},\hat{x})
	+\frac{\sigma}{2}\left( \frac{(\alpha^2-2\alpha)\left\|F_{\mathcal{I}_k}(x_k)\right\|_2^2}{\left\| F'_{\mathcal{I}_k}(x_k) \right\|_2^2} \right.\\
	&\quad \left. +\frac{2\alpha\cdot F^T_{\mathcal{I}_k}(x_k)}{\left\| F'_{\mathcal{I}_k}(x_k) \right\|_2^2}\left( F_{\mathcal{I}_k}(x_k)-F_{\mathcal{I}_k}(\hat{x})-F'_{\mathcal{I}_k}(x_k)(x_k-\hat{x}) \right)\right). 
	\end{aligned}    
	\end{equation*}
Moreover, using the Cauchy-Schwarz inequality, it obtains that
\begin{equation*}
\begin{aligned}
 D_\varphi^{x_{k+1}^*}(x_{k+1},\hat{x})\leq &D_\varphi^{x_{k}^*}(x_{k},\hat{x}) 
 +\frac{\sigma}{2}\left((\alpha^2-2\alpha)\frac{\left\|F_{\mathcal{I}_k}(x_k)\right\|_2^2}{\left\| F'_{\mathcal{I}_k}(x_k)\right\|_2^2} \right.\\
&\left. + 2\alpha\frac{\left\|F^T_{\mathcal{I}_k}(x_k)\right\|_2}{\left\| F'_{\mathcal{I}_k}(x_k) \right\|_2^2}\left\|F_{\mathcal{I}_k}(x_k)-F_{\mathcal{I}_k}(\hat{x})-F'_{\mathcal{I}_k}(x_k)(x_k-\hat{x})\right\|_2\right) .
\end{aligned}
\end{equation*}
From \eqref{eq:lemftaun1} in Lemma \ref{lemftau}, it yields that
\begin{equation}
\begin{aligned}
 D_\varphi^{x_{k+1}^*}(x_{k+1},\hat{x}) 
 &	\leq D_\varphi^{x_{k}^*}(x_{k},\hat{x}) 
	+\frac{\sigma}{2}\left((\alpha^2-2\alpha)\frac{\left\|F_{\mathcal{I}_k}(x_k)\right\|_2^2}{\left\| F'_{\mathcal{I}_k}(x_k)\right\|_2^2}
	+ 2\alpha\eta \frac{\left\|F^T_{\mathcal{I}_k}(x_k)\right\|_2}{\left\| F'_{\mathcal{I}_k}(x_k) \right\|_2^2}\left\|F_{\mathcal{I}_k}(x_k)-F_{\mathcal{I}_k}(\hat{x}) \right\|_2 \right)\\
&\leq D_\varphi^{x_{k}^*}(x_{k},\hat{x}) 
	+\frac{\sigma}{2}\left( (\alpha^2-2\alpha)\frac{\left\|F_{\mathcal{I}_k}(x_k)\right\|_2^2}{\left\| F'_{\mathcal{I}_k}(x_k)\right\|_2^2}
	+ 2\alpha\eta \frac{\left\|F^T_{\mathcal{I}_k}(x_k)\right\|_2}{\left\| F'_{\mathcal{I}_k}(x_k) \right\|_2^2}\left\|F_{\mathcal{I}_k}(x_k)\right\|_2 \right)\\
&\leq D_\varphi^{x_{k}^*}(x_{k},\hat{x}) 
	-\frac{(2(1-\eta)\alpha-\alpha^2)\sigma}{2}\frac{\left\|F_{\mathcal{I}_k}(x_k)\right\|_2^2}{\left\| F'_{\mathcal{I}_k}(x_k)\right\|_2^2},
\end{aligned}
\end{equation}
where the second inequality depends on $F(\hat{x})=0$. 
\end{proof}

Lemma \ref{lemcstabnbkiter} provides an important property for the sequence $\{x_k\}_{k=0}^{\infty}$ generated by the scheme \eqref{eq:iterconNBK} at each iteration. Next, we give an upper bound of the convergence rate for the averaging block nonlinear Bregman-Kaczmarz method with constant stepsize, which is described in Theorem \ref{thmcstABNBK}.

\begin{theorem}\label{thmcstABNBK}
\rm Under the same condition as Lemma \ref{lemcstabnbkiter}, for every $i\in\left\{1,\cdots, m \right\}$, if the nonlinear function $F_i$ satisfies the local tangential cone condition, $\eta= \max\limits_i \eta_i<\frac{1}{2}$ and exists a vector $\hat{x}$ such that $F(\hat{x})=0$, then the averaging block nonlinear Bregman-Kaczmarz method with constant stepsize $\alpha\in[1, 2(1-\eta))$ is convergent and 
\begin{equation}
   D_\varphi^{x_{k+1}^*}(x_{k+1},\hat{x}) 
 \leq \left(1- \frac{(2(1-\eta)\alpha-\alpha^2)\sigma}{M(1+\eta^2)\kappa_F^2(F'_{\mathcal{I}_k}(x_k))} \right)D_\varphi^{x_{k}^*}(x_{k},\hat{x}). 
\end{equation}
\end{theorem}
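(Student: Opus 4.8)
The plan is to upgrade the one-step decrease of Lemma \ref{lemcstabnbkiter} to a genuine linear contraction by bounding the progress term $\|F_{\mathcal{I}_k}(x_k)\|_2^2/\|F'_{\mathcal{I}_k}(x_k)\|_2^2$ from below by a multiple of $D_\varphi^{x_k^*}(x_k,\hat{x})$. Since $F(\hat{x})=0$ gives $F_{\mathcal{I}_k}(\hat{x})=0$, inequality \eqref{eq:lemftaun2} of Lemma \ref{lemftau} yields $\|F_{\mathcal{I}_k}(x_k)\|_2^2 \ge \frac{1}{(1+\eta)^2}\|F'_{\mathcal{I}_k}(x_k)(x_k-\hat{x})\|_2^2$. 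Applying Lemma \ref{lem:matsingular} to $F'_{\mathcal{I}_k}(x_k)$ and the vector $x_k-\hat{x}$ (which lies in the row space of $F'_{\mathcal{I}_k}(x_k)$ under the full column rank hypothesis on the Jacobian inherited from Theorem \ref{thm:convtccNBK}) gives $\|F'_{\mathcal{I}_k}(x_k)(x_k-\hat{x})\|_2^2 \ge \sigma_{\min}^2(F'_{\mathcal{I}_k}(x_k))\|x_k-\hat{x}\|_2^2$. Dividing by $\|F'_{\mathcal{I}_k}(x_k)\|_2^2$ and using $\kappa_F^2(F'_{\mathcal{I}_k}(x_k)) = \|F'_{\mathcal{I}_k}(x_k)\|_2^2/\sigma_{\min}^2(F'_{\mathcal{I}_k}(x_k))$ turns these into $\|F_{\mathcal{I}_k}(x_k)\|_2^2/\|F'_{\mathcal{I}_k}(x_k)\|_2^2 \ge \|x_k-\hat{x}\|_2^2 / \big((1+\eta)^2\kappa_F^2(F'_{\mathcal{I}_k}(x_k))\big)$.

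Next I would use the $M$-smoothness of $\varphi$ — the extra standing hypothesis already present in Theorem \ref{thm:convtccNBK}. Since $x_k=\nabla\varphi^*(x_k^*)$ forces $x_k^*=\nabla\varphi(x_k)$, $M$-smoothness gives $D_\varphi^{x_k^*}(x_k,\hat{x}) \le \frac{M}{2}\|x_k-\hat{x}\|_2^2$, hence $\|x_k-\hat{x}\|_2^2 \ge \frac{2}{M}D_\varphi^{x_k^*}(x_k,\hat{x})$. Combining with the previous estimate and multiplying by $\frac{\sigma}{2}$ gives $\frac{\sigma}{2}\|F_{\mathcal{I}_k}(x_k)\|_2^2/\|F'_{\mathcal{I}_k}(x_k)\|_2^2 \ge \frac{\sigma}{M(1+\eta)^2\kappa_F^2(F'_{\mathcal{I}_k}(x_k))}D_\varphi^{x_k^*}(x_k,\hat{x})$. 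Inserting this into the bound of Lemma \ref{lemcstabnbkiter}, together with $2(1-\eta)\alpha-\alpha^2>0$ (valid for $\alpha\in[1,2(1-\eta))$ and $\eta<\frac12$), produces exactly the claimed one-step contraction. (This route yields the coefficient $(1+\eta)^2$, which is the constant that comes straight out of \eqref{eq:lemftaun2}.)

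For the convergence assertion I would note that the contraction factor lies in $[0,1)$: it is strictly below $1$ because $2(1-\eta)\alpha-\alpha^2>0$ and $\sigma,M>0$, and it is nonnegative because $2(1-\eta)\alpha-\alpha^2 \le (1-\eta)^2 \le 1$, $\sigma\le M$, and $\kappa_F\ge 1$, $(1+\eta)^2\ge1$. By Lemma \ref{lemcstabnbkiter} the sequence $D_\varphi^{x_k^*}(x_k,\hat{x})$ is nonincreasing, so $\sigma$-strong convexity confines every iterate to the ball $\{x:\ \frac{\sigma}{2}\|x-\hat{x}\|_2^2\le D_\varphi^{x_0^*}(x_0,\hat{x})\}$; continuity of $F'$ and the rank assumption then bound $\kappa_F(F'_{\mathcal{I}_k}(x_k))$ uniformly on that ball, so the factors stay bounded away from $1$, whence $D_\varphi^{x_k^*}(x_k,\hat{x})\to0$ and, again by strong convexity, $x_k\to\hat{x}$.

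The main obstacle I anticipate is the matrix inequality $\|F'_{\mathcal{I}_k}(x_k)(x_k-\hat{x})\|_2^2 \ge \sigma_{\min}^2(F'_{\mathcal{I}_k}(x_k))\|x_k-\hat{x}\|_2^2$: Lemma \ref{lem:matsingular} is stated for $\|A^Tu\|_2$ with $u$ in the column space of $A$, so to apply it to $\|F'_{\mathcal{I}_k}(x_k)v\|_2$ with $v=x_k-\hat{x}$ one needs either full column rank of the block Jacobian or that $v$ lies in its row space. This (together with the $M$-smoothness of $\varphi$) is a hypothesis that should be stated explicitly rather than only folded into ``the same condition as Lemma \ref{lemcstabnbkiter}''; the remaining arithmetic is routine.
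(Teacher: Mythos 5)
Your proposal is correct and follows essentially the same route as the paper's own proof: Lemma \ref{lemcstabnbkiter}, then inequality \eqref{eq:lemftaun2} applied with $F_{\mathcal{I}_k}(\hat{x})=0$, then the singular-value bound of Lemma \ref{lem:matsingular}, and finally the $M$-smoothness of $\varphi$ to convert $\|x_k-\hat{x}\|_2^2$ into $\tfrac{2}{M}D_\varphi^{x_k^*}(x_k,\hat{x})$. Your two side remarks are also consistent with the paper: its derivation likewise produces the factor $(1+\eta)^2$ (so the $(1+\eta^2)$ in the theorem statement appears to be a typo), and it applies Lemma \ref{lem:matsingular} to $F'_{\mathcal{I}_k}(x_k)(x_k-\hat{x})$ with the same implicit reliance on the rank hypothesis that you flag.
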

\begin{Proof}
From Lemma \ref{lemcstabnbkiter}, it follows that
\begin{equation*}
\begin{aligned}
	D_\varphi^{x_{k+1}^*}(x_{k+1},\hat{x}) 
	&\leq D_\varphi^{x_{k}^*}(x_{k},\hat{x})
	- \frac{(2(1-\eta)\alpha-\alpha^2)\sigma}{2}\frac{\left\|F_{\mathcal{I}_k}(x_k)-F_{\mathcal{I}_k}(\hat{x})\right\|_2^2}{\left\| F'_{\mathcal{I}_k}(x_k)\right\|_2^2}\\
	&\leq D_\varphi^{x_{k}^*}(x_{k},\hat{x})- \frac{(2(1-\eta)\alpha-\alpha^2)\sigma}{2(1+\eta)^2\left\| F'_{\mathcal{I}_k}(x_k)\right\|_2^2}\left\| F'_{\mathcal{I}_k}(x_k)(x_k- \hat{x}) \right\|_2^2\\
	& \leq D_\varphi^{x_{k}^*}(x_{k},\hat{x})- \frac{(2(1-\eta)\alpha-\alpha^2)\sigma}{2(1+\eta)^2\left\| F'_{\mathcal{I}_k}(x_k)\right\|_2^2} \sigma_{\min}^2(F'_{\mathcal{I}_k}(x_k))\left\|x_{k}-\hat{x} \right\|_2^2.
\end{aligned} 
\end{equation*}
The second and last inequality hold because \eqref{eq:lemftaun2} and Lemma \ref{lem:matsingular}, respectively. 
From Lemma \ref{lem:normwithbregman}(ii) and $\varphi$ is $M$-smooth, it concludes that
\begin{equation*}
\begin{aligned}
	D_\varphi^{x_{k+1}^*}(x_{k+1},\hat{x}) 
	&\leq D_\varphi^{x_{k}^*}(x_{k},\hat{x})-\frac{(2(1-\eta)\alpha-\alpha^2)\sigma}{M(1+\eta)^2\left\| F'_{\mathcal{I}_k}(x_k)\right\|_2^2} \sigma_{\min}^2(F'_{\mathcal{I}_k}(x_k)) D_\varphi^{x_k^*}(x_k,\hat x)  \\
	& \leq \left(1- \frac{(2(1-\eta)\alpha-\alpha^2)\sigma}{M(1+\eta)^2\kappa_F^2(F'_{\mathcal{I}_k}(x_k))} \right)D_\varphi^{x_{k}^*}(x_{k},\hat{x}).
\end{aligned} 
\end{equation*}
\end{Proof}

\subsection{Averaging block nonlinear Bregman-Kaczmarz methods with adaptive stepsize}
In this subsection, the convergence analysis of the averaging block nonlinear Bregman-Kaczmarz method with adaptive stepsize is given.
\begin{lemma}\label{lemadpabnbkiter}
 \rm  If function $\varphi$ is $\sigma$-strongly convex, $F$ satisfies the local tangential cone condition, $\eta=\max\limits_i \eta_i< \frac{1}{2}$ and a vector $\hat{x}$ satisfies $F(\hat{x})=0$, then from the averaging block iteration formula \eqref{eqiterabnk} with adaptive stepsize 
\eqref{eq:abnbkadpsize}, $\delta\in (0,2(1-\eta))$ and $\mathcal{I}_k\subset \{1,2,\cdots,m\}$, it obeys that
\begin{equation}\label{eqestiadp}
D_\varphi^{x_{k+1}^*}(x_{k+1},\hat{x}) 
 \leq D_\varphi^{x_{k}^*}(x_{k},\hat{x})-\frac{(2(1-\eta)\delta-\delta^2)\sigma}{2} \frac{\left\|F_{\mathcal{I}_k}(x_k)\right\|_2^2}{\sigma_{\max}^2(F'_{\mathcal{I}_k}(x_k) )}.
    \end{equation}
    
\begin{Proof}
Inserting the iterative formula of the ABNBK method with adaptive stepsize \eqref{eq:abnbkadpsize} into inequality \eqref{eqrecurbregiter}, it holds that
\begin{equation*}
\begin{aligned}
& D_\varphi^{x_{k+1}^*}(x_{k+1},\hat{x}) \leq D_\varphi^{x_{k}^*}(x_{k},\hat{x})  
+ \left \langle -\alpha_k\frac{\sigma(F'_{\mathcal{I}_k}(x_k))^TF_{\mathcal{I}_k}(x_k)}{\left\|F'_{\mathcal{I}_k}(x_k)\right\|^2_2}, x_{k}-\hat{x} \right \rangle
+ \frac{1}{2\sigma}\left\|\alpha_k\frac{\sigma(F'_{\mathcal{I}_k}(x_k))^TF_{\mathcal{I}_k}(x_k)}{\left\|F'_{\mathcal{I}_k}(x_k) \right\|^2_2 }\right\|^2_2\\
&\leq D_\varphi^{x_{k}^*}(x_{k},\hat{x})  + \frac{\sigma\delta^2}{2}\left\| \frac{\sum_{i\in\mathcal{I}_k}\hat{\omega}_k^{(i)}(F_i(x_k))^2}{\left\|\sum_{i\in\mathcal{I}_k}\hat{\omega}_k^{(i)}F_i(x_k)(\nabla F_i(x_k))^T \right\|_2^2}\frac{(F'_{\mathcal{I}_k}(x_k))^TF_{\mathcal{I}_k}(x_k)}{\left\|F'_{\mathcal{I}_k}(x_k) \right\|^2_2 }\right\|^2_2 \\
&\quad - {\sigma\delta}\left \langle \frac{\sum_{i\in\mathcal{I}_k}\hat{\omega}_k^{(i)}(F_i(x_k))^2}{\left\|\sum_{i\in\mathcal{I}_k}\hat{\omega}_k^{(i)}F_i(x_k)(\nabla F_i(x_k))^T \right\|_2^2}\frac{(F'_{\mathcal{I}_k}(x_k))^TF_{\mathcal{I}_k}(x_k)}{\left\|F'_{\mathcal{I}_k}(x_k)\right\|^2_2},  x_{k}-\hat{x} \right \rangle \\
&\leq D_\varphi^{x_{k}^*}(x_{k},\hat{x})+ \frac{\sigma}{2}\left( \delta^2\left\|\frac{\left\|F_{\mathcal{I}_k}(x_k)\right\|_2^2(F'_{\mathcal{I}_k}(x_k))^TF_{\mathcal{I}_k}(x_k)}{\left\|(F'_{\mathcal{I}_k}(x_k) )^TF_{\mathcal{I}_k}(x_k)\right\|_2^2} \right\|_2^2
- 2\delta \frac{ \left\|F_{\mathcal{I}_k}(x_k)\right\|_2^2F^T_{\mathcal{I}_k}(x_k)F'_{\mathcal{I}_k}(x_k)}{\left\|(F'_{\mathcal{I}_k}(x_k) )^TF_{\mathcal{I}_k}(x_k)\right\|_2^2}(x_k-\hat{x}) \right) \\
&\leq D_\varphi^{x_{k}^*}(x_{k},\hat{x}) + \frac{\sigma}{2}\left( 2\delta\frac{\left\|F_{\mathcal{I}_k}(x_k)\right\|_2^2F^T_{\mathcal{I}_k}(x_k)}{\left\|(F'_{\mathcal{I}_k}(x_k) )^TF_{\mathcal{I}_k}(x_k)\right\|_2^2} \left( F_{\mathcal{I}_k}(x_k)-F_{\mathcal{I}_k}(\hat{x})-F'_{\mathcal{I}_k}(x_k)(x_k-\hat{x}) \right) \right.\\
 & \left. \quad - 2\delta\frac{\left\|F_{\mathcal{I}_k}(x_k)\right\|_2^2F^T_{\mathcal{I}_k}(x_k)}{\left\|(F'_{\mathcal{I}_k}(x_k) )^TF_{\mathcal{I}_k}(x_k)\right\|_2^2} F_{\mathcal{I}_k}(x_k)+  \delta^2\frac{\left\|F_{\mathcal{I}_k}(x_k)\right\|_2^4}{\left\|(F'_{\mathcal{I}_k}(x_k) )^TF_{\mathcal{I}_k}(x_k)\right\|_2^2} \right)\\
&\leq D_\varphi^{x_{k}^*}(x_{k},\hat{x}) + \frac{\sigma}{2} \left( 2\delta\frac{\left\|F_{\mathcal{I}_k}(x_k)\right\|_2^2F^T_{\mathcal{I}_k}(x_k)}{\left\|(F'_{\mathcal{I}_k}(x_k) )^TF_{\mathcal{I}_k}(x_k)\right\|_2^2}\left( F_{\mathcal{I}_k}(x_k)-F_{\mathcal{I}_k}(\hat{x})-F'_{\mathcal{I}_k}(x_k)(x_k-\hat{x}) \right) \right.\\
&\left. \quad + (\delta^2-2\delta)\frac{\left\|F_{\mathcal{I}_k}(x_k)\right\|_2^4}{\left\|(F'_{\mathcal{I}_k}(x_k) )^TF_{\mathcal{I}_k}(x_k)\right\|_2^2} \right).
\end{aligned}
\end{equation*}
Furthermore, using the Cauchy-Schwarz inequality, it holds that
\begin{equation*}
\begin{aligned}
D_\varphi^{x_{k+1}^*}(x_{k+1},\hat{x}) &\leq D_\varphi^{x_{k}^*}(x_{k},\hat{x})
  + \frac{\sigma}{2} \left(2\delta\frac{\left\|F_{\mathcal{I}_k}(x_k) \right\|_2^2\left\|F^T_{\mathcal{I}_k}(x_k)\right\|_2}{\left\|(F'_{\mathcal{I}_k}(x_k) )^TF_{\mathcal{I}_k}(x_k)\right\|_2^2}\left\|F_{\mathcal{I}_k}(x_k)-F_{\mathcal{I}_k}(\hat{x})-F'_{\mathcal{I}_k}(x_k)(x_k-\hat{x})\right\|_2  \right.\\
&\left. \quad + (\delta^2-2\delta)\frac{\left\|F_{\mathcal{I}_k}(x_k)\right\|_2^4}{\left\|(F'_{\mathcal{I}_k}(x_k) )^TF_{\mathcal{I}_k}(x_k)\right\|_2^2}  \right).
    \end{aligned}
\end{equation*}
From \eqref{eq:lemftaun1} in Lemma \ref{lemftau} and $F(\hat{x})=0$, it yields that
\begin{equation}
\begin{aligned}
D_\varphi^{x_{k+1}^*}(x_{k+1},\hat{x}) &\leq D_\varphi^{x_{k}^*}(x_{k},\hat{x})
  + \frac{\sigma}{2} \left( 2\delta\eta \frac{\left\|F_{\mathcal{I}_k}(x_k) \right\|_2^2}{\left\|(F'_{\mathcal{I}_k}(x_k) )^T F_{\mathcal{I}_k}(x_k)\right\|_2^2}\left\|F^T_{\mathcal{I}_k}(x_k)\right\|_2 \left\|F_{\mathcal{I}_k}(x_k) \right\|_2  \right.\\
&\left. \quad + (\delta^2-2\delta)\frac{\left\|F_{\mathcal{I}_k}(x_k)\right\|_2^4}{\left\|(F'_{\mathcal{I}_k}(x_k) )^TF_{\mathcal{I}_k}(x_k)\right\|_2^2}  \right). \\
 &\leq D_\varphi^{x_{k}^*}(x_{k},\hat{x})- \frac{\sigma\left(2(1-\eta)\delta-\delta^2\right)}{2} \frac{\left\|F_{\mathcal{I}_k}(x_k)\right\|_2^4}{\left\|(F'_{\mathcal{I}_k}(x_k) )^TF_{\mathcal{I}_k}(x_k)\right\|_2^2}.
\end{aligned}
\end{equation}
From Lemma \ref{lem:matsingular}, it has
\begin{equation}
    \begin{aligned}
  D_\varphi^{x_{k+1}^*}(x_{k+1},\hat{x}) &\leq D_\varphi^{x_{k}^*}(x_{k},\hat{x})
  - \frac{\sigma\left(2(1-\eta)\delta-\delta^2\right)}{2} \frac{\left\|F_{\mathcal{I}_k}(x_k)\right\|_2^4}{\sigma_{\max}^2(F'_{\mathcal{I}_k}(x_k) )\left\| F_{\mathcal{I}_k}(x_k)\right\|_2^2}\\
  &\leq D_\varphi^{x_{k}^*}(x_{k},\hat{x})-\frac{\sigma\left(2(1-\eta)\delta-\delta^2\right)}{2} \frac{\left\|F_{\mathcal{I}_k}(x_k)\right\|_2^2}{\sigma_{\max}^2(F'_{\mathcal{I}_k}(x_k) )},
    \end{aligned}
\end{equation}
which is exactly the estimate in \eqref{eqestiadp}.
\end{Proof}
\end{lemma}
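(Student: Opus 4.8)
The plan is to specialize the generic Bregman recursion of Lemma~\ref{lem:bregrecursion} to the adaptive-stepsize iteration and then absorb every error term with the local tangential cone condition. First I would rewrite the increment of \eqref{eqiterabnk} in the form \eqref{eqspupdate}, namely $x_{k+1}^*-x_k^*=-\alpha_k\sigma(F'_{\mathcal{I}_k}(x_k))^TF_{\mathcal{I}_k}(x_k)/\|F'_{\mathcal{I}_k}(x_k)\|_2^2$, and substitute the extrapolated stepsize \eqref{eq:abnbkadpsize}; after cancelling the weight normalizations this collapses to $x_{k+1}^*-x_k^*=-\sigma\delta\,\|F_{\mathcal{I}_k}(x_k)\|_2^2(F'_{\mathcal{I}_k}(x_k))^TF_{\mathcal{I}_k}(x_k)/\|(F'_{\mathcal{I}_k}(x_k))^TF_{\mathcal{I}_k}(x_k)\|_2^2$. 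Plugging this into \eqref{eqrecurbregiter}, the term $\frac{1}{2\sigma}\|x_{k+1}^*-x_k^*\|_2^2$ becomes $\frac{\sigma\delta^2}{2}\|F_{\mathcal{I}_k}(x_k)\|_2^4/\|(F'_{\mathcal{I}_k}(x_k))^TF_{\mathcal{I}_k}(x_k)\|_2^2$, while $\langle x_{k+1}^*-x_k^*,x_k-\hat x\rangle$ becomes $-\sigma\delta\,\|F_{\mathcal{I}_k}(x_k)\|_2^2\,F_{\mathcal{I}_k}(x_k)^TF'_{\mathcal{I}_k}(x_k)(x_k-\hat x)/\|(F'_{\mathcal{I}_k}(x_k))^TF_{\mathcal{I}_k}(x_k)\|_2^2$; the point of the extrapolated stepsize is exactly that both terms reduce to a multiple of the single scalar $\|F_{\mathcal{I}_k}(x_k)\|_2^4/\|(F'_{\mathcal{I}_k}(x_k))^TF_{\mathcal{I}_k}(x_k)\|_2^2$.

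Second, I would bound $F_{\mathcal{I}_k}(x_k)^TF'_{\mathcal{I}_k}(x_k)(x_k-\hat x)$ from below by writing $F'_{\mathcal{I}_k}(x_k)(x_k-\hat x)=F_{\mathcal{I}_k}(x_k)-\big(F_{\mathcal{I}_k}(x_k)-F_{\mathcal{I}_k}(\hat x)-F'_{\mathcal{I}_k}(x_k)(x_k-\hat x)\big)$, using $F(\hat x)=0$, so that the inner product equals $\|F_{\mathcal{I}_k}(x_k)\|_2^2$ minus a defect term. The defect is controlled by Cauchy-Schwarz together with \eqref{eq:lemftaun1} of Lemma~\ref{lemftau}, which bounds $\|F_{\mathcal{I}_k}(x_k)-F_{\mathcal{I}_k}(\hat x)-F'_{\mathcal{I}_k}(x_k)(x_k-\hat x)\|_2$ by $\eta\|F_{\mathcal{I}_k}(x_k)\|_2$ (again using $F(\hat x)=0$), so that $F_{\mathcal{I}_k}(x_k)^TF'_{\mathcal{I}_k}(x_k)(x_k-\hat x)\ge(1-\eta)\|F_{\mathcal{I}_k}(x_k)\|_2^2$. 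Combining with the quadratic term gives $D_\varphi^{x_{k+1}^*}(x_{k+1},\hat x)\le D_\varphi^{x_k^*}(x_k,\hat x)-\frac{\sigma(2(1-\eta)\delta-\delta^2)}{2}\|F_{\mathcal{I}_k}(x_k)\|_2^4/\|(F'_{\mathcal{I}_k}(x_k))^TF_{\mathcal{I}_k}(x_k)\|_2^2$, whose coefficient is negative precisely because $\eta<\frac12$ and $\delta\in(0,2(1-\eta))$.

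Finally, I would invoke the singular-value upper bound of Lemma~\ref{lem:matsingular}, $\|(F'_{\mathcal{I}_k}(x_k))^TF_{\mathcal{I}_k}(x_k)\|_2^2\le\sigma_{\max}^2(F'_{\mathcal{I}_k}(x_k))\|F_{\mathcal{I}_k}(x_k)\|_2^2$, to bound the denominator from above; since the coefficient sitting in front is negative, this weakens the estimate in the right direction and cancels one factor of $\|F_{\mathcal{I}_k}(x_k)\|_2^2$, giving exactly \eqref{eqestiadp}. I do not anticipate a genuine conceptual obstacle: the only tools are Cauchy-Schwarz, the tangential cone inequality \eqref{eq:lemftaun1}, and the singular-value bound, all already available. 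The main obstacle is instead the algebraic bookkeeping of the nested fractions in the expansion of \eqref{eqrecurbregiter}, and in particular checking that the extrapolated stepsize really does collapse the linear and quadratic contributions onto the same quantity $\|F_{\mathcal{I}_k}(x_k)\|_2^4/\|(F'_{\mathcal{I}_k}(x_k))^TF_{\mathcal{I}_k}(x_k)\|_2^2$.
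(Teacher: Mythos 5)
Your proposal is correct and follows essentially the same route as the paper: substitute the extrapolated stepsize into the recursion of Lemma~\ref{lem:bregrecursion} so that both the linear and quadratic terms collapse onto $\|F_{\mathcal{I}_k}(x_k)\|_2^4/\|(F'_{\mathcal{I}_k}(x_k))^TF_{\mathcal{I}_k}(x_k)\|_2^2$, control the cross term via Cauchy--Schwarz and \eqref{eq:lemftaun1} with $F(\hat x)=0$, and finish with the singular-value bound of Lemma~\ref{lem:matsingular}. Your packaging of the inner-product estimate as $F_{\mathcal{I}_k}(x_k)^TF'_{\mathcal{I}_k}(x_k)(x_k-\hat x)\ge(1-\eta)\|F_{\mathcal{I}_k}(x_k)\|_2^2$ is just a tidier statement of the same add-and-subtract step the paper performs inline.
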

Lemma \ref{lemadpabnbkiter} shows a crucial relationship between solution vector $\hat{x}$ and iterate sequence $\{x_k\}_{k=0}^{\infty}$ generated by Algorithm \ref{alg:ABNBK} with adaptive stepsize defined by \eqref{eq:abnbkadpsize}. 

The convergence of Algorithm \ref{alg:ABNBK} with adaptive stepsize is proved and its upper bound of the convergence rate given in Theorem \ref{thm:conv_adpMRABNBK}. 
  
\begin{theorem}\label{thm:conv_adpMRABNBK}
\rm Under the same condition as Theorem \ref{thm:convtccNBK}, for every $i\in\{1,\cdots,m\}$, if the nonlinear function $F_i$ satisfies the local tangential cone condition, $\eta= \max\limits_i \eta_i<\frac{1}{2}$ and exists a vector such that $F(x_{\ast})=0$, then the averaging block nonlinear Bregman-Kaczmarz method with adaptive stepsize and $\delta\in(0, 2(1-\eta))$ is convergent. Moreover, it satisfies that 
\begin{equation}
   \frac{2}{\sigma}\|x_k-\hat x\|_2^2 \leq D_\varphi^{x_{k+1}^*}(x_{k+1},\hat{x}) \leq \left(1- \frac{\sigma(2(1-\eta)\delta-\delta^2)}{M(1+\eta^2)\kappa^2(F'_{\mathcal{I}_k}(x_k)) } \right)D_\varphi^{x_{k}^*}(x_{k},\hat{x}).
\end{equation} 
\end{theorem}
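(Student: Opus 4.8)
The plan is to combine Lemma~\ref{lemadpabnbkiter} with the smoothness of $\varphi$ and the singular value bounds of Lemma~\ref{lem:matsingular} in exactly the same pattern as the proof of Theorem~\ref{thmcstABNBK}, only now starting from the adaptive-stepsize recursion \eqref{eqestiadp} rather than the constant-stepsize one. First I would record that under the stated hypotheses ($\delta\in(0,2(1-\eta))$ with $\eta<\tfrac12$) the scalar $2(1-\eta)\delta-\delta^2$ is strictly positive, so the per-step decrease in Lemma~\ref{lemadpabnbkiter} is genuinely a decrease; this is what gives monotonicity of $D_\varphi^{x_k^*}(x_k,\hat x)$ and hence, together with the lower bound from $\sigma$-strong convexity (Definition~\ref{def:eucbregman}), the convergence $x_k\to\hat x$ in the $\|\cdot\|_2$ sense.

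Next I would convert the right-hand side of \eqref{eqestiadp} into something proportional to $D_\varphi^{x_k^*}(x_k,\hat x)$. The key chain is: use $F(\hat x)=0$ to write $\|F_{\mathcal I_k}(x_k)\|_2 = \|F_{\mathcal I_k}(x_k)-F_{\mathcal I_k}(\hat x)\|_2$; apply \eqref{eq:lemftaun2} of Lemma~\ref{lemftau} to get $\|F_{\mathcal I_k}(x_k)-F_{\mathcal I_k}(\hat x)\|_2^2 \ge \frac{1}{(1+\eta)^2}\|F'_{\mathcal I_k}(x_k)(x_k-\hat x)\|_2^2$; then apply Lemma~\ref{lem:matsingular} (noting $x_k-\hat x$ need not lie in $\mathrm{range}(F'_{\mathcal I_k}(x_k)^T)$, so one uses the lower singular-value bound on the component that does, or equivalently the full-column-rank assumption inherited from Theorem~\ref{thm:convtccNBK}) to get $\|F'_{\mathcal I_k}(x_k)(x_k-\hat x)\|_2^2 \ge \sigma_{\min}^2(F'_{\mathcal I_k}(x_k))\|x_k-\hat x\|_2^2$; and finally use that $\varphi$ is $M$-smooth, i.e. $D_\varphi^{x_k^*}(x_k,\hat x)\le \frac{M}{2}\|x_k-\hat x\|_2^2$, to replace $\|x_k-\hat x\|_2^2$ by $\frac{2}{M}D_\varphi^{x_k^*}(x_k,\hat x)$. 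Assembling these, and writing $\kappa(F'_{\mathcal I_k}(x_k)) = \sigma_{\max}(F'_{\mathcal I_k}(x_k))/\sigma_{\min}(F'_{\mathcal I_k}(x_k))$, the factor $\frac{\|F_{\mathcal I_k}(x_k)\|_2^2}{\sigma_{\max}^2(F'_{\mathcal I_k}(x_k))}$ becomes at least $\frac{2\sigma_{\min}^2(F'_{\mathcal I_k}(x_k))}{M(1+\eta)^2\sigma_{\max}^2(F'_{\mathcal I_k}(x_k))}D_\varphi^{x_k^*}(x_k,\hat x) = \frac{2}{M(1+\eta)^2\kappa^2(F'_{\mathcal I_k}(x_k))}D_\varphi^{x_k^*}(x_k,\hat x)$, which plugged back into \eqref{eqestiadp} yields the contraction
\begin{equation*}
D_\varphi^{x_{k+1}^*}(x_{k+1},\hat x) \le \Bigl(1 - \frac{\sigma(2(1-\eta)\delta-\delta^2)}{M(1+\eta)^2\kappa^2(F'_{\mathcal I_k}(x_k))}\Bigr) D_\varphi^{x_k^*}(x_k,\hat x).
\end{equation*}
The left inequality $\frac{2}{\sigma}\|x_k-\hat x\|_2^2 \le D_\varphi^{x_{k+1}^*}(x_{k+1},\hat x)$ — which I suspect should read $\frac{\sigma}{2}\|x_{k+1}-\hat x\|_2^2 \le D_\varphi^{x_{k+1}^*}(x_{k+1},\hat x)$ — is then immediate from Definition~\ref{def:eucbregman}, and one should check/correct the index and the constant to match.

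The main obstacle is the same subtlety that the constant-stepsize proof glosses over: Lemma~\ref{lem:matsingular} requires $u\in\mathrm{range}(A)$, but here one needs a lower bound on $\|F'_{\mathcal I_k}(x_k)(x_k-\hat x)\|_2$ in terms of $\|x_k-\hat x\|_2$, i.e. the roles of $A$ and $A^T$ are swapped. This is fine precisely because the hypotheses of Theorem~\ref{thm:convtccNBK} (which this theorem inherits) include that $F'(x)$ has full column rank on $B_{r,\varphi}(\hat x)$, so $F'_{\mathcal I_k}(x_k)$ has trivial kernel and $\|F'_{\mathcal I_k}(x_k)v\|_2 \ge \sigma_{\min}(F'_{\mathcal I_k}(x_k))\|v\|_2$ for all $v$; I would state this explicitly. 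A secondary point is ensuring the adaptive stepsize \eqref{eq:abnbkadpsize} is well-defined (denominator nonzero) along the iteration, which again follows from full column rank together with $F_{\mathcal I_k}(x_k)\ne 0$ whenever $x_k\ne\hat x$. Beyond that, the argument is a routine reassembly; I would also note the apparent typo of $(1+\eta^2)$ versus $(1+\eta)^2$ in the displayed bound and use $(1+\eta)^2$ consistently with Lemma~\ref{lemftau}.
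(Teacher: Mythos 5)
Your proposal follows essentially the same route as the paper's own proof: starting from Lemma~\ref{lemadpabnbkiter}, using $F(\hat x)=0$, inequality~\eqref{eq:lemftaun2}, Lemma~\ref{lem:matsingular}, and the $M$-smoothness of $\varphi$ to arrive at the contraction factor involving $\kappa^2(F'_{\mathcal I_k}(x_k))$. Your added observations --- that the lower singular-value bound really rests on the full-column-rank hypothesis inherited from Theorem~\ref{thm:convtccNBK} rather than on the range condition of Lemma~\ref{lem:matsingular}, and that $(1+\eta^2)$ and the left-hand term $\frac{2}{\sigma}\|x_k-\hat x\|_2^2$ appear to be typos for $(1+\eta)^2$ and $\frac{\sigma}{2}\|x_{k+1}-\hat x\|_2^2$ --- are accurate remarks about the paper's statement, not deviations in method.
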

\begin{Proof}
From Lemma \ref{lemadpabnbkiter} and $F(\hat{x})=0$, it follows that
\begin{equation}
 \begin{aligned}
 D_\varphi^{x_{k+1}^*}(x_{k+1},\hat{x}) &\leq D_\varphi^{x_{k}^*}(x_{k},\hat{x})
 -\frac{\sigma(\left(2(1-\eta)\delta-\delta^2\right))}{2} \frac{\left\|F_{\mathcal{I}_k}(x_k)-F_{\mathcal{I}_k}(\hat{x})\right\|_2^2}{\sigma_{\max}^2(F'_{\mathcal{I}_k}(x_k) )} \\
&\leq  D_\varphi^{x_{k}^*}(x_{k},\hat{x})-\frac{\sigma(\left(2(1-\eta)\delta-\delta^2\right))}{2(1+\eta^2)\sigma^2_{\max}(F'_{\mathcal{I}_k}(x_k)) }\left\|F'_{\mathcal{I}_k}(x_k)(x_k- x_{\ast}) \right\|_2^2\\
&\leq D_\varphi^{x_{k}^*}(x_{k},\hat{x})-\frac{\sigma(\left(2(1-\eta)\delta-\delta^2\right))}{2(1+\eta^2)\sigma^2_{\max}(F'_{\mathcal{I}_k}(x_k)) }\sigma_{\min}^2(F'_{\mathcal{I}_k}(x_k))\left\|x_{k}-x_{\ast} \right\|_2^2\\
&\leq D_\varphi^{x_{k}^*}(x_{k},\hat{x})-\frac{\sigma(\left(2(1-\eta)\delta-\delta^2\right))}{M(1+\eta^2)\sigma^2_{\max}(F'_{\mathcal{I}_k}(x_k)) }\sigma_{\min}^2(F'_{\mathcal{I}_k}(x_k))D_\varphi^{x_{k}^*}(x_{k},\hat{x})\\
& \leq \left(1- \frac{\sigma(\left(2(1-\eta)\delta-\delta^2\right))}{M(1+\eta^2)\kappa^2(F'_{\mathcal{I}_k}(x_k)) } \right)D_\varphi^{x_{k}^*}(x_{k},\hat{x}).
\end{aligned} 
\end{equation}
The second inequality holds because \eqref{eq:lemftaun2} and the last inequality depends on the definition of condition number $\kappa(\cdot)$. 
\end{Proof}

\section{Numerical experiments} \label{secnumer_abnbk}
In this section, numerical experiments are presented to verify the efficiency of the averaging block nonlinear Kaczmarz method for nonlinear sparse signal recovery.
The averaging block nonlinear Bregman-Kaczmarz method with constant stepsize (abbreviated as `ABNBK-c') and with adaptive stepsize (abbreviated as `ABNBK-a') are compared with the nonlinear Bregman-Kaczmarz method (abbreviated as `NBK') proposed in \cite{2024GLW} and the maximum residual nonlinear Bregman-Kaczmarz method (abbreviated as `MRNBK'). 

The optimal parameters for the averaging block nonlinear Bregman-Kaczmarz methods are experimentally selected by minimizing the number of iteration steps. For the nonlinear Bregman-Kaczmarz method and the maximum residual nonlinear Bregman-Kaczmarz method, the control index $i_k$ is chosen by the probability criterion $| r_k^{(i_k)}|^2/\|r_k \|_2^2$ and the greedy rule $i_k=\arg \max\limits_{1\leq i\leq m}|r_k(i_k)|^2$, respectively.

Consider the nonlinear sparse signal recovery \cite{2013LV}
\begin{equation}\label{eqmultiquadeq}
\min\limits_{x\in \mathbb{R}^n } \varphi(x), \quad \text{s.t. }F_i(x) = \frac12\langle x,A^{(i)}x\rangle + \langle b^{(i)},x\rangle + c^{(i)}=0, 
\end{equation}
where the data $A\in\mathbb{R}^{m\times n\times n}$, matrix $A^{(i)}\in\mathbb{R}^{n\times n}$, $b^{(i)}\in\mathbb{R}^n$, 
$c^{(i)}\in\mathbb{R}$, $i=1,\cdots,m$. 

In all experiments, the original sparse signal $\hat{x}\in\mathbb{R}^n$ is generated with its nonzero entries from the standard normal distribution. The sparsity of $\hat{x}$ is denoted `sp', that is the ratio of the number of nonzero elements and the total elements in $\hat{x}$.
Let 
$$
c^{(i)} = - \Big( \frac{1}{2}\langle \hat{x},A^{(i)}\hat{x}\rangle + \langle b^{(i)},\hat{x}\rangle \Big),
$$
where the data $b^{(i)}$ is generated with entries from the standard normal distribution.
For recovering the sparse signal $\hat x$, the sparse generating function $\varphi(x)=\lambda\|x\|_1+\frac12\|x\|_2^2$ and its corresponding soft shrinkage operator $S_{\lambda}(x)=\max\{ |x|-\lambda,0 \}\cdot \text{sign}(x) $($\lambda>0$) is used. 

In all experiments, let the sparse parameter be $\lambda=2$. All iterations initial from $x_0=\nabla\varphi^*(x_0^*)= S_\lambda(x_0^*)$ with $x_0^*$ is sampled from the standard normal distribution and terminate when the relative residual satisfies
$$
\frac{\left\|F(x_k)\right\|_2^2} {\left\|F(x_0) \right\|_2^2} \leq 10^{-6}
$$
or the number of iterations exceeds a maximum number, e.g., 1000.

\begin{example}\label{ex1:GAUmatrix}
In this experiment, each tested matrix $A^{(i)}(i=1,\cdots,m)$ is generated with its entries from standard normal distribution. 
\end{example}

In Tables \ref{tab:resultSP01GAU} and \ref{tab:resultSP005GAU}, the number of iteration steps and the elapsed CPU time for all methods when the size of $A$ and the value of $sp$ varies are listed, respectively. 

From Tables \ref{tab:resultSP01GAU} and \ref{tab:resultSP005GAU}, it is observed that
the ABNBK-a method requires the fewest iteration steps and least CPU time among all methods. Moreover, the nonlinear Bregman-Kaczmarz and the maximum residual Bregman-Kaczmarz methods are difficult to recover the signal as the size of the problem increases.
While the proposed ABNBK method can successfully reconstruct the sparse signal within the finite iteration steps, which indicate that the ABNBK method is advantageous for solving the large nonlinear problem.

\begin{table}[!htbp] 
\centering  
\caption{ Numerical results for Example \ref{ex1:GAUmatrix} with $sp=0.1$. } \label{tab:resultSP01GAU}  
  \resizebox{\textwidth}{!}{  
\begin{tabular}{ccccccccccc} 
\hline
\multirow{2}{*} {$m$} & \multirow{2}{*} {$n$}& \multirow{2}{*} {$sp$}   & \multicolumn{2}{c}{NBK} & \multicolumn{2}{c}{MRNBK} & \multicolumn{2}{c} {ABNBK-c} & \multicolumn{2}{c} {ABNBK-a} \\ 
 \cmidrule[0.25mm](lr){4-5} \cmidrule[0.25mm](lr){6-7} \cmidrule[0.25mm](lr){8-9} \cmidrule[0.25mm](lr){10-11}  & & & IT &CPU &IT &CPU &IT &CPU &IT &CPU  \\ 
\hline
 200 &	100 &	0.1&	 550&	 14.410&	 440&	 11.644&	 57&	 1.882&	 31&	 0.964\\  
 300 &	150 &	0.1&	 1000&	 69.937&	 973&	 68.079&	 98&	 8.012&	 61&	 4.800\\  
 400 &	200 &	0.1&	 1000&	 136.681&	 1000&	 145.929&	 889&	 143.792&	 319&	 52.946\\  
 500 &	250 &	0.1&	 1000&	 241.504&	 1000&	 248.077&	 77&	 20.577&	 72&	 20.346\\  
 600 &	300 &	0.1&	 1000&	 374.762&	 1000&	 395.748&	 71&	 29.028&	 59&	 26.273\\  
 800 &	350 &	0.1&	 1000&	 1372.571&	 1000&	 1376.741&	 572&	 898.566&	 295&	 463.534\\ 
 900 &	450 &	0.1&	 1000&	 1869.268&	 1000&	 1901.807&	 176&	 383.619&	 144&	 314.991\\  
 1000 &	500 &	0.1&	 1000&	 2497.861&	 1000&	 2558.757&	 351&	 1009.091&	 230&	 682.802\\  
\hline
\end{tabular}  
  }   
 \begin{tablenotes} 
 \footnotesize               
\item The parameters in the ABNK-c and ABNK-a methods are $(\alpha^*, \theta_2^*)=(1.9,0.1)$ and $(\omega^*, \theta_3^*)=(1.3,0.1)$, respectively. 
\end{tablenotes}
\end{table} 

\begin{table}[!htbp] 
\centering 
\caption{Numerical results for Example \ref{ex1:GAUmatrix} with $sp=0.05$.} \label{tab:resultSP005GAU}  
\begin{tabular}{ccccccccccc} 
\hline
\multirow{2}{*} {$m$} & \multirow{2}{*} {$n$}& \multirow{2}{*} {$sp$}   & \multicolumn{2}{c}{NBK} & \multicolumn{2}{c}{MRNBK} & \multicolumn{2}{c} {ABNBK-c} & \multicolumn{2}{c} {ABNBK-a} \\ 
 \cmidrule[0.25mm](lr){4-5} \cmidrule[0.25mm](lr){6-7} \cmidrule[0.25mm](lr){8-9} \cmidrule[0.25mm](lr){10-11}  & & & IT &CPU &IT &CPU &IT &CPU &IT &CPU  \\ 
\hline
 200 &	100 &	0.05&	 162&	 4.242&	 54&  1.413&  23&	0.750&	 17&  0.502\\  
 300 &	150 &	0.05&	 1000&	 66.172&   954&  64.939& 227&  17.180& 172&  13.950\\  
 400 &	200 &	0.05&	 686&	 93.031&	287&	39.925&	 28&   4.581&  17&	 2.740\\  
 500 &	250 &	0.05&	 1000&	 247.383&	 503&	 126.716&	 68&	 18.229&	 57&	 16.239\\  
 600 &	300 &	0.05&	 1000&	 380.897&	 954&	 360.259&	 99&	 40.723&	 83&	 33.302\\  
800 &	350 &	0.05&	 1000&	 1432.852&	 1000&	 1466.418&	 53&	 89.332&	 31&	 52.868\\   
 900 &	450 &	0.05&	 1000&	 1988.321&	 1000&	 1997.292&	 75&	 165.403&	 62&	 136.588\\  
 1000 &	500 &	0.05&	 1000&	 2579.332&	 1000&	 2611.398&	 56&	 162.497&	 35&	 99.414\\  
\hline
\end{tabular}  
 \begin{tablenotes} 
 \footnotesize               
\item The parameters in the ABNK-c and ABNK-a methods are $(\alpha^*, \theta_2^*)=(1.9,0.1)$ and $(\omega^*, \theta_3^*)=(1.3,0.1)$, respectively. 
\end{tablenotes}
\end{table} 

In Figures \ref{fig:RESvsITsp01GAU} and \ref{fig:RESvsITsp005GAU}, the curves of the norm of the relative residual versus the number of iteration steps and the solution error versus the number of iteration steps for all tested methods are plotted, respectively.

From Figures \ref{fig:RESvsITsp01GAU} and \ref{fig:RESvsITsp005GAU}, it is seen that the ABNBK-c and ABNBK-a method converge much faster than the NBK and MRNBK methods, which implies that the averaging technique is effective and can greatly improve the convergence speed of nonlinear Bregman-Kaczmarz method. 

\begin{figure}[!htbp] 
\centering 
\vspace{-0.4cm}  
 \subfigtopskip= 1pt  
\subfigbottomskip= 0.1pt  
\subfigcapskip= -5pt 
	\subfigure[$(m,n)=(200, 100)$]
	{
		\begin{minipage}[t]{0.48\linewidth}
			\centering
			\includegraphics[width=1\textwidth]{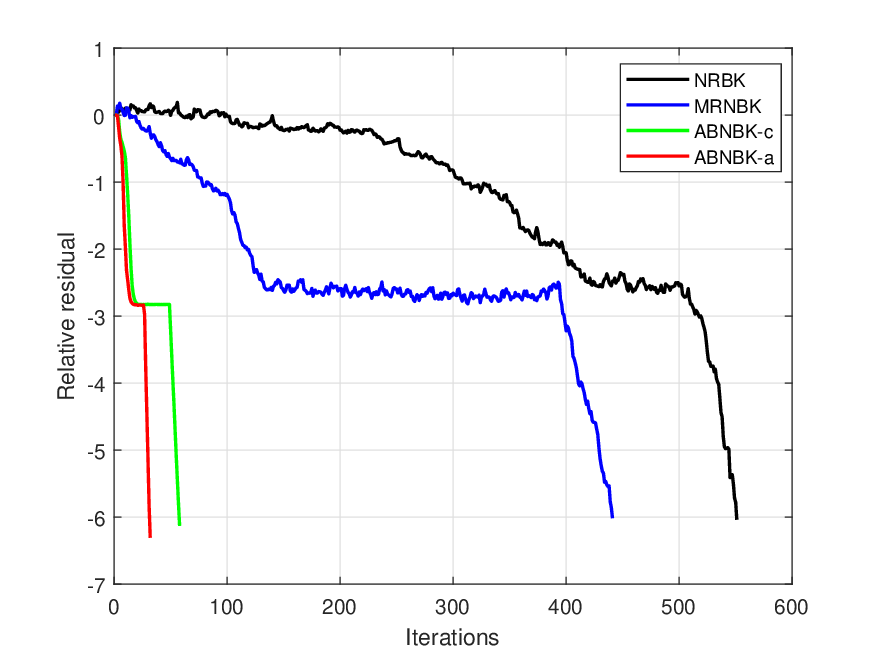}
		\end{minipage}
	}
	\subfigure[$(m,n)=(1000, 500)$]
	{
		\begin{minipage}[t]{0.48\linewidth}
			\centering 
			\includegraphics[width=1\textwidth]{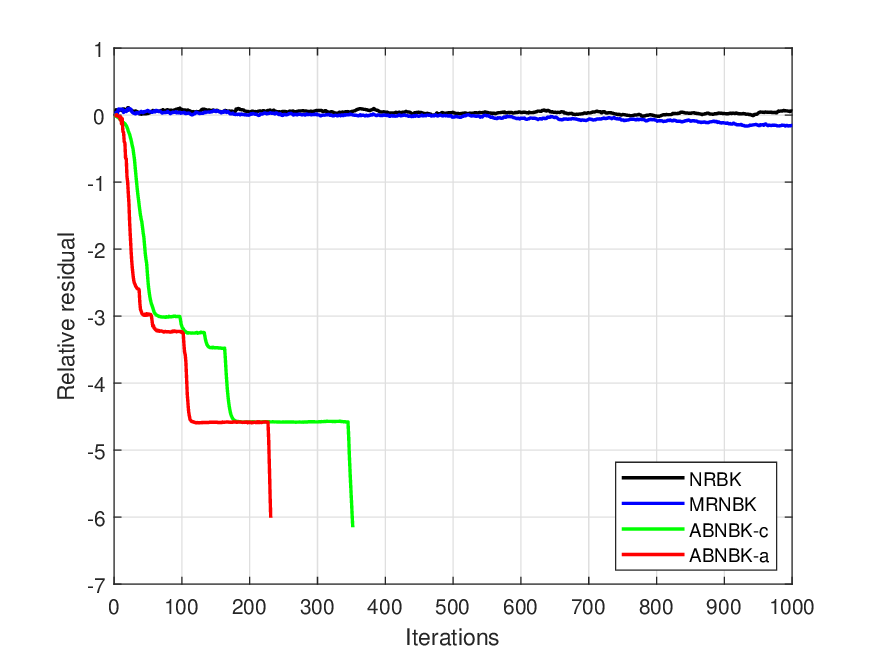}
		\end{minipage}
	} 
\caption{Convergence curves for Example \ref{ex1:GAUmatrix} with $sp=0.1$.} 
		\label{fig:RESvsITsp01GAU}
\end{figure}

\begin{figure}[!htbp] 
\centering 
\vspace{-0.4cm}  
 \subfigtopskip=1pt  
\subfigbottomskip=0.1pt  
\subfigcapskip=-5pt 
	\subfigure[$(m,n)=(200, 100)$]
	{
		\begin{minipage}[t]{0.48\linewidth}
			\centering
			\includegraphics[width=1\textwidth]{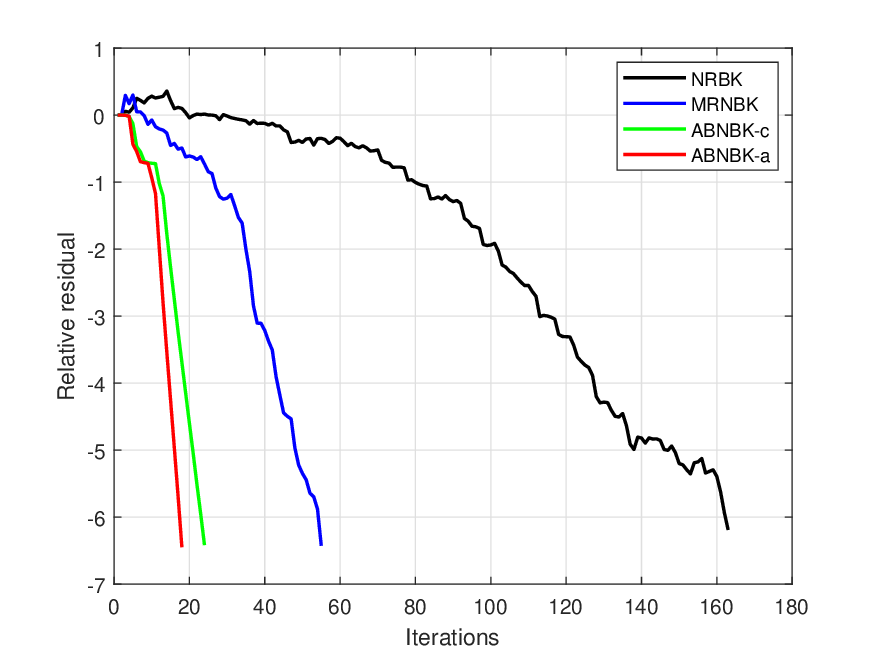}
		\end{minipage}
	}
	\subfigure[$(m,n)=(1000, 500)$]
	{
		\begin{minipage}[t]{0.48\linewidth}
			\centering 
			\includegraphics[width=1\textwidth]{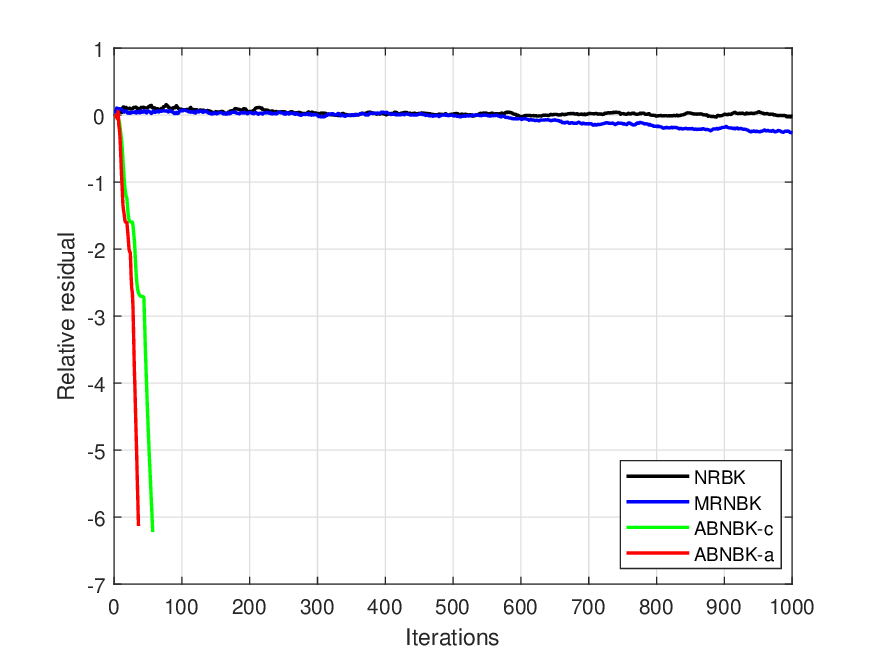}
		\end{minipage}
	} 
  \caption{Convergence curves for Example \ref{ex1:GAUmatrix} with $sp=0.05$.} 
		\label{fig:RESvsITsp005GAU}
\end{figure}

In Figures \ref{fig:xRecoversp01GAU} and \ref{fig:xRecoversp005GAU}, the original signal and the recovered signals by all methods when $n=500$, $sp=0.1$ and $sp=0.05$ are depicted, respectively.

From Figures \ref{fig:xRecoversp01GAU} and \ref{fig:xRecoversp005GAU}, it is observed that the recovered signals by the ABNBK-c and ABNBK-a method are more close to the original signal than other two methods.

\begin{figure}[!htbp] 
\centering 
\vspace{-0.4cm}  
 \subfigtopskip=1pt  
\subfigbottomskip=0.1pt  
\subfigcapskip=-5pt 
	\subfigure[NBK]
	{
		\begin{minipage}[t]{0.48\linewidth}
			\centering
			\includegraphics[width=1\textwidth]{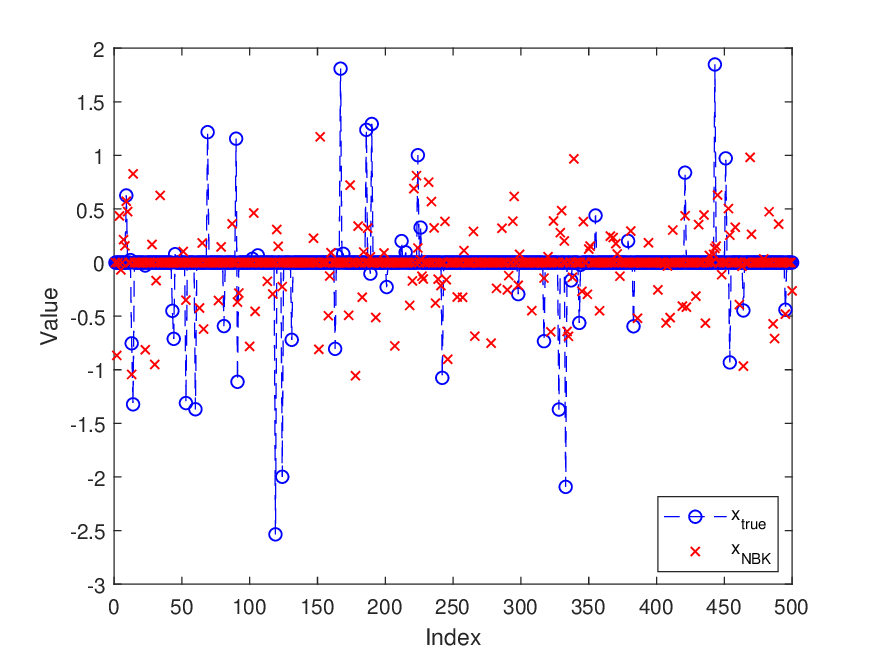}
		\end{minipage}
	}
	\subfigure[MRNBK]
	{
		\begin{minipage}[t]{0.48\linewidth}
			\centering 
			\includegraphics[width=1\textwidth]{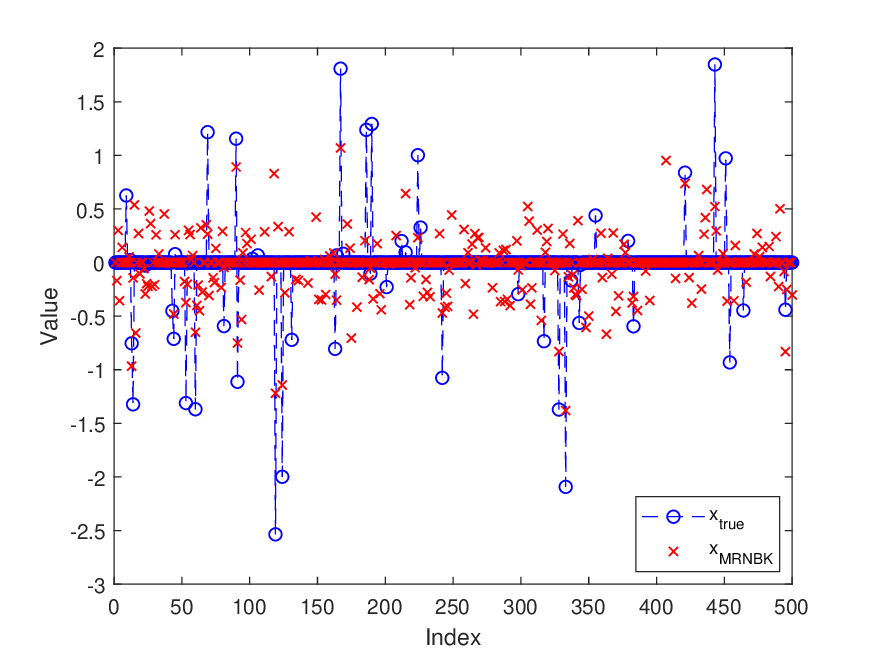}
		\end{minipage}
	} 
    \subfigure[ABNBK-c]
	{
		\begin{minipage}[t]{0.48\linewidth}
			\centering
			\includegraphics[width=1\textwidth]{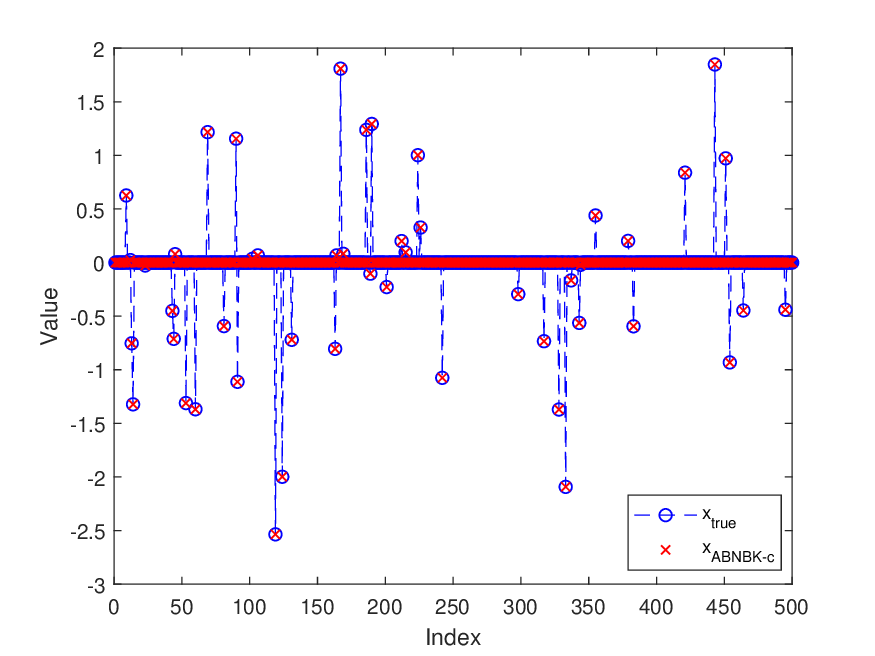}
		\end{minipage}
	}
	\subfigure[ABNBK-a]
	{
		\begin{minipage}[t]{0.48\linewidth}
			\centering 
			\includegraphics[width=1\textwidth]{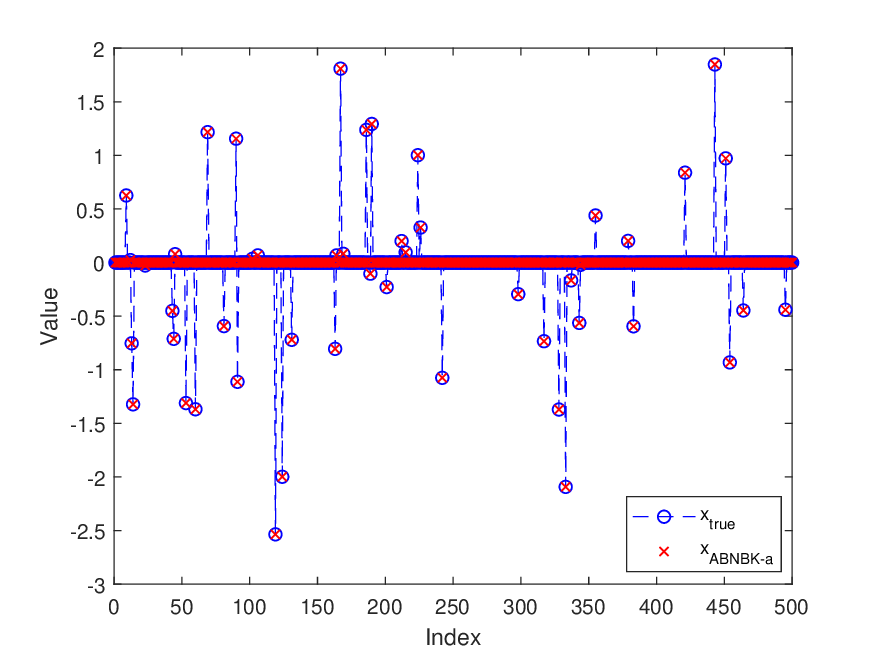}
		\end{minipage}
	} 
  \caption{The exact signal and recovered signals for Example \ref{ex1:GAUmatrix} with $sp=0.1$.} 
  \label{fig:xRecoversp01GAU}
\end{figure}

\begin{figure}[!htbp] 
\centering 
\vspace{-0.4cm}  
 \subfigtopskip=1pt  
\subfigbottomskip=0.1pt  
\subfigcapskip=-5pt 
	\subfigure[NBK]
	{
		\begin{minipage}[t]{0.48\linewidth}
			\centering
			\includegraphics[width=1\textwidth]{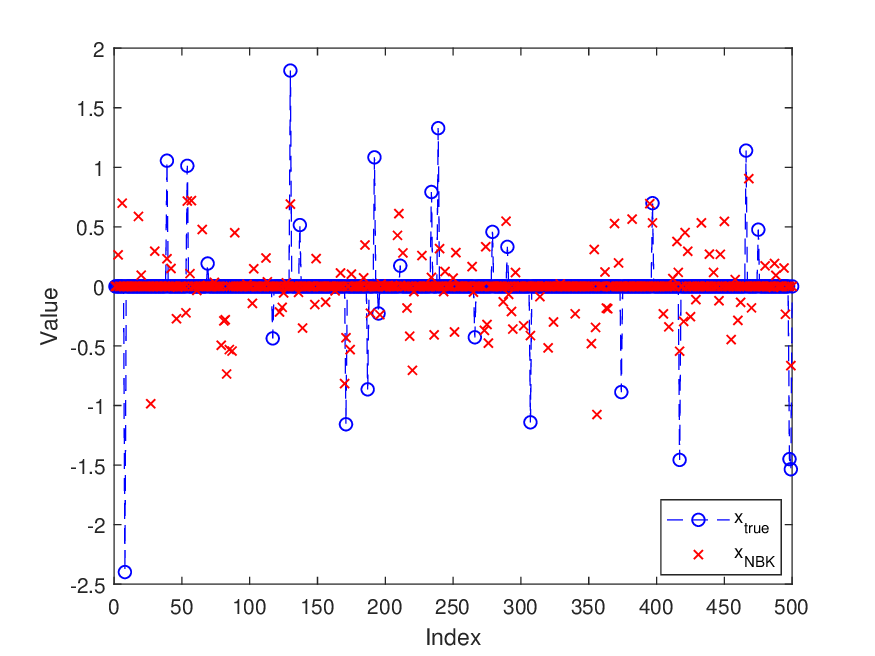}
		\end{minipage}
	}
	\subfigure[MRNBK]
	{
		\begin{minipage}[t]{0.48\linewidth}
			\centering 
			\includegraphics[width=1\textwidth]{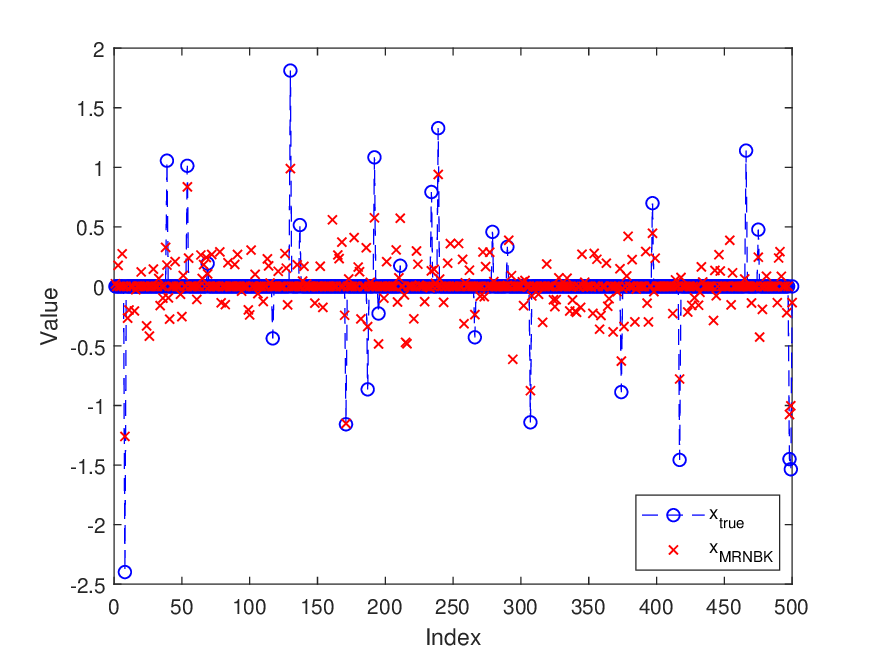}
		\end{minipage}
	} 
    \subfigure[ABNBK-c]
	{
		\begin{minipage}[t]{0.48\linewidth}
			\centering
			\includegraphics[width=1\textwidth]{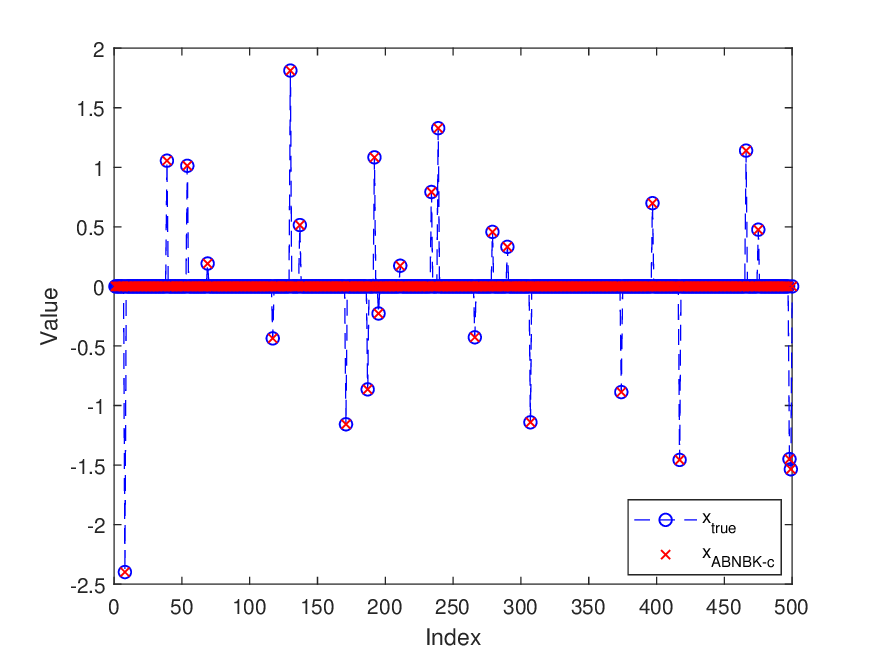}
		\end{minipage}
	}
	\subfigure[ABNBK-a]
	{
		\begin{minipage}[t]{0.48\linewidth}
			\centering 
			\includegraphics[width=1\textwidth]{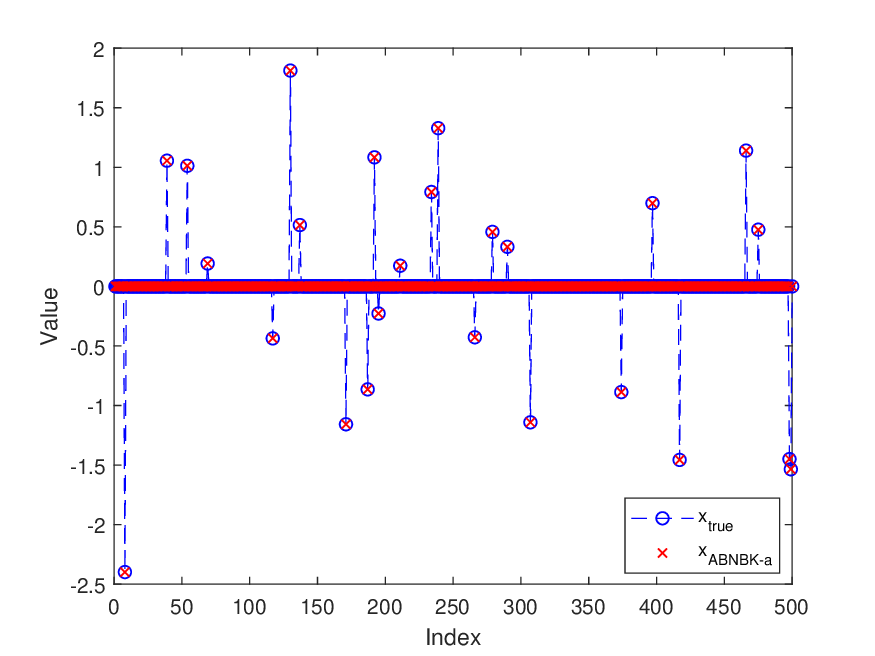}
		\end{minipage}
	} 
  \caption{The exact signal and recovered signals for Example \ref{ex1:GAUmatrix} with $sp=0.05$.} 
		\label{fig:xRecoversp005GAU}
\end{figure}

\begin{example}\label{ex1:DCTmatrix}
In this experiment, each tested matrix $A^{(i)}(i=1,\cdots,m)$ is a random partial discrete cosine transform (DCT) matrix which $j$-th column is generated through the expression
$$
A^{(i)}(:,j)= cos(2\pi (j-1)\xi), \quad j=1,\cdots,n,  
$$
where $\xi\in \mathbb{R}^{m\times 1}$ is a column vector with uniformly and independently sampled elements from $[0,1]$.
\end{example}

In Tables \ref{tab:resultSP01DCT} and \ref{tab:resultSP005DCT}, the number of iteration steps and the elapsed CPU time for all methods when the size of $A$ and the value of $sp$ varies are listed, respectively. 

From Tables \ref{tab:resultSP01DCT} and \ref{tab:resultSP005DCT}, it is observed that
the ABNBK-a method requires the fewest iteration steps and least CPU time among all methods. Moreover, the nonlinear Bregman-Kaczmarz and the maximum residual Bregman-Kaczmarz method are difficult to recover the signal as the size of the problem increases.
While the proposed ABNBK method can successfully reconstruct the sparse signal within the finite iteration steps, which indicates that the ABNBK methods are advantageous for solving large nonlinear problems.

\begin{table}[!htbp] 
\centering 
\caption{Numerical results for Example \ref{ex1:DCTmatrix} with $sp=0.1$.} \label{tab:resultSP01DCT} 
\resizebox{\textwidth}{!}{  
\begin{tabular}{ccccccccccc} 
\hline
\multirow{2}{*} {$m$} & \multirow{2}{*} {$n$}& \multirow{2}{*} {$sp$}   & \multicolumn{2}{c}{NBK} & \multicolumn{2}{c}{MRNBK} & \multicolumn{2}{c} {ABNBK-c} & \multicolumn{2}{c} {ABNBK-a} \\ 
 \cmidrule[0.25mm](lr){4-5} \cmidrule[0.25mm](lr){6-7} \cmidrule[0.25mm](lr){8-9} \cmidrule[0.25mm](lr){10-11}  & & & IT &CPU &IT &CPU &IT &CPU &IT &CPU  \\ 
\hline
 200 &	100 &	0.1&	 683&	 18.139&	 926&	 24.716&	 67&	 2.054&	 35&	 1.083\\  
 300 &	150 &	0.1&	 1000&	 64.848&	 1000&	 65.715&	 381&	 29.113&	 202&	 14.792\\  
 400 &	200 &	0.1&	 1000&	 139.383&	 1000&	 144.561&	 610&	 98.694&	 403&	 66.817\\  
 500 &	250 &	0.1&	 1000&	 249.151&	 1000&	 257.723&	 774&	 218.658&	 498&	 148.628\\  
 600 &	300 &	0.1&	 1000&	 415.380&	 1000&	 400.643&	 616&	 264.427&	 167&	 75.847\\  
 800 &	400 &	0.1&	 1000&	 1342.724&	 1000&	 1359.726&	 537&	 819.644&	 254&	 387.752\\  
 900 &	450 &	0.1&	 1000&	 2019.286&	 1000&	 3370.743&	 189&	 431.184&	 123&	 278.255\\  
 1000 &	500 &	0.1&	 1000&	 2555.362&	 1000&	 2618.649&	 137&	 399.672&	 82&	 246.498\\  
\hline
\end{tabular}  
 }   
 \end{table} 

\begin{table}[!htbp]  
\centering 
\caption{Numerical results for Example \ref{ex1:DCTmatrix} with $sp=0.05$.} \label{tab:resultSP005DCT} 
\resizebox{\textwidth}{!}{  
\begin{tabular}{ccccccccccc} 
\hline
\multirow{2}{*} {$m$} & \multirow{2}{*} {$n$}& \multirow{2}{*} {sp}   & \multicolumn{2}{c}{NBK} & \multicolumn{2}{c}{MRNBK} & \multicolumn{2}{c} {ABNBK-c} & \multicolumn{2}{c} {ABNBK-a} \\ 
 \cmidrule[0.25mm](lr){4-5} \cmidrule[0.25mm](lr){6-7} \cmidrule[0.25mm](lr){8-9} \cmidrule[0.25mm](lr){10-11}  & & & IT &CPU &IT &CPU &IT &CPU &IT &CPU  \\ 
\hline
 200 &	100 &	0.05&	 845&	 24.829&	 332&	 9.351&	 319&	 10.653&	 229&	 7.491\\  
 300 &	150 &	0.05&	 274&	 18.756&	 159&	 10.888&	 25&	 1.991&	 16&	 1.280\\  
 400 &	200 &	0.05&	 799&	 112.075&	 868&	 127.403&	 174&	 27.518&	 103&	 17.404\\  
 500 &	250 &	0.05&	 590&	 142.966&	 315&	 80.687&	 32&	 9.434&	 21&	 6.125\\  
 600 &	300 &	0.05&	 1000&	 409.482&	 1000&	 423.836&	 51&	 22.340&	 29&	 13.263\\  
 800 &	400 &	0.05&	 1000&	 1348.747&	 676&	 900.045&	 67&	 105.745&	 41&	 63.521\\  
 900 &	450 &	0.05&	 1000&	 1970.914&	 1000&	 1974.391&	 73&	 164.428&	 42&	 93.813\\  
 1000 &	500 &	0.05&	 1000&	 2422.198&	 1000&	 2409.087&	 55&	 148.657&	 27&	 72.452\\  
\hline
\end{tabular}  
 }   
 \end{table}

In Figures \ref{fig:RESvsITsp01DCT} and \ref{fig:RESvsITsp005DCT}, the curves of the norm of the relative residual versus the number of iteration steps and the solution error versus the number of iteration steps for all tested methods are shown, respectively.

From Figures \ref{fig:RESvsITsp01DCT} and \ref{fig:RESvsITsp005DCT}, it is seen that the ABNBK-c and ABNBK-a method converge much faster than the NBK and MRNBK methods, which shows the efficiency of the averaging technique. 

\begin{figure}[!htbp] 
\centering 
\vspace{-0.4cm}  
 \subfigtopskip=1pt  
\subfigbottomskip=0.1pt  
\subfigcapskip=-5pt 
	\subfigure[$(m,n)=(200, 100)$]
	{
		\begin{minipage}[t]{0.48\linewidth}
			\centering
			\includegraphics[width=1\textwidth]{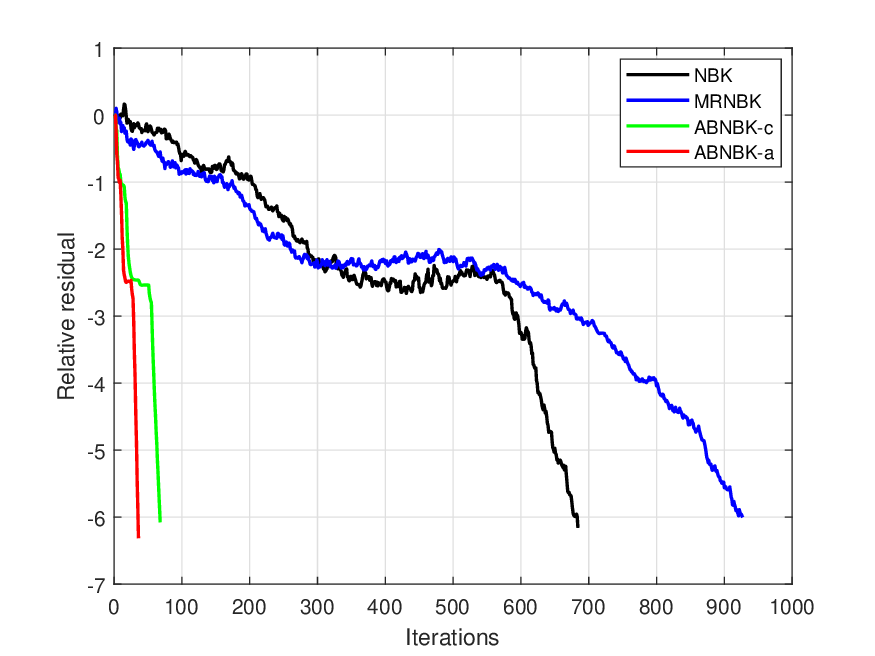}
		\end{minipage}
	}
	\subfigure[$(m,n)=(1000, 500)$]
	{
		\begin{minipage}[t]{0.48\linewidth}
			\centering 
			\includegraphics[width=1\textwidth]{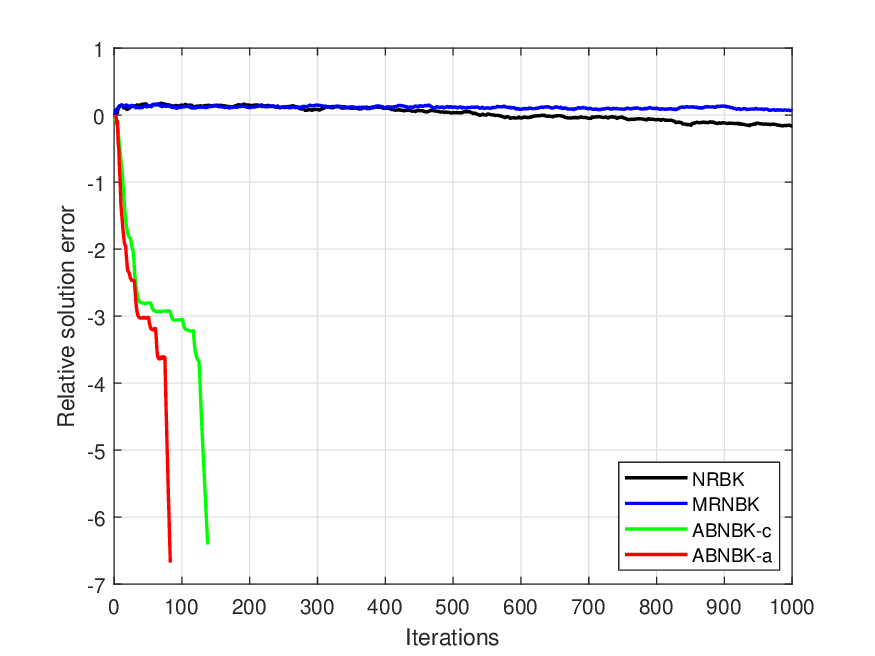}
		\end{minipage}
	} 
  \caption{Convergence curves for Example \ref{ex1:DCTmatrix} with $sp=0.1$.} 
		\label{fig:RESvsITsp01DCT}
\end{figure}

\begin{figure}[!htbp] 
\centering 
\vspace{-0.4cm}  
 \subfigtopskip=1pt  
\subfigbottomskip=0.1pt  
\subfigcapskip=-5pt 
	\subfigure[$(m,n)=(200, 100)$]
	{
		\begin{minipage}[t]{0.48\linewidth}
			\centering
			\includegraphics[width=1\textwidth]{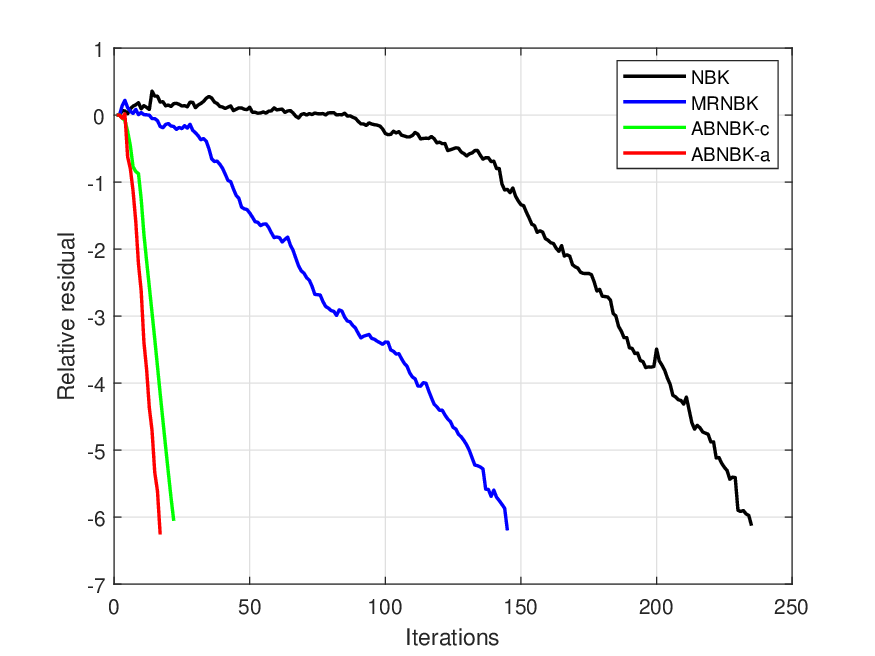}
		\end{minipage}
	}
	\subfigure[$(m,n)=(1000, 500)$]
	{
		\begin{minipage}[t]{0.48\linewidth}
			\centering 
			\includegraphics[width=1\textwidth]{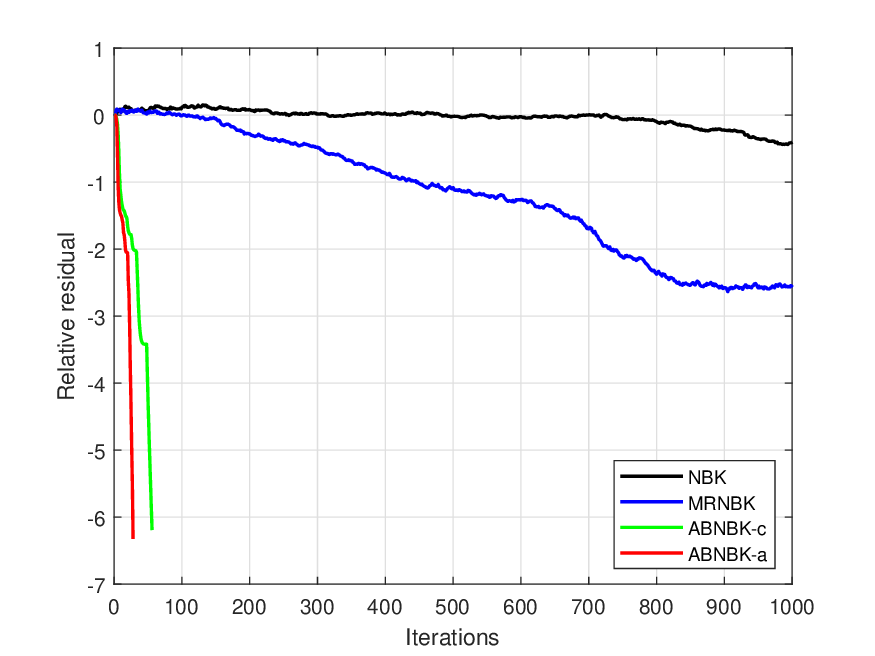}
		\end{minipage}
	} 
  \caption{Convergence curves for Example \ref{ex1:DCTmatrix} with $sp=0.05$.} 
		\label{fig:RESvsITsp005DCT}
\end{figure}

In Figures \ref{fig:xRecoversp01DCT} and \ref{fig:xRecoversp005DCT}, the original signal and the recovered signals by all methods when $n=500$, $sp=0.1$ and $sp=0.05$ are displayed, respectively.

From Figures \ref{fig:xRecoversp01DCT} and \ref{fig:xRecoversp005DCT}, it is observed that the recovered signals by the ABNBK-c and ABNBK-a method are more close to the original signal than other two methods.

\begin{figure}[!htbp] 
\centering 
\vspace{-0.4cm}  
 \subfigtopskip=1pt  
\subfigbottomskip=0.1pt  
\subfigcapskip=-5pt 
	\subfigure[NBK]
	{
		\begin{minipage}[t]{0.48\linewidth}
			\centering
			\includegraphics[width=1\textwidth]{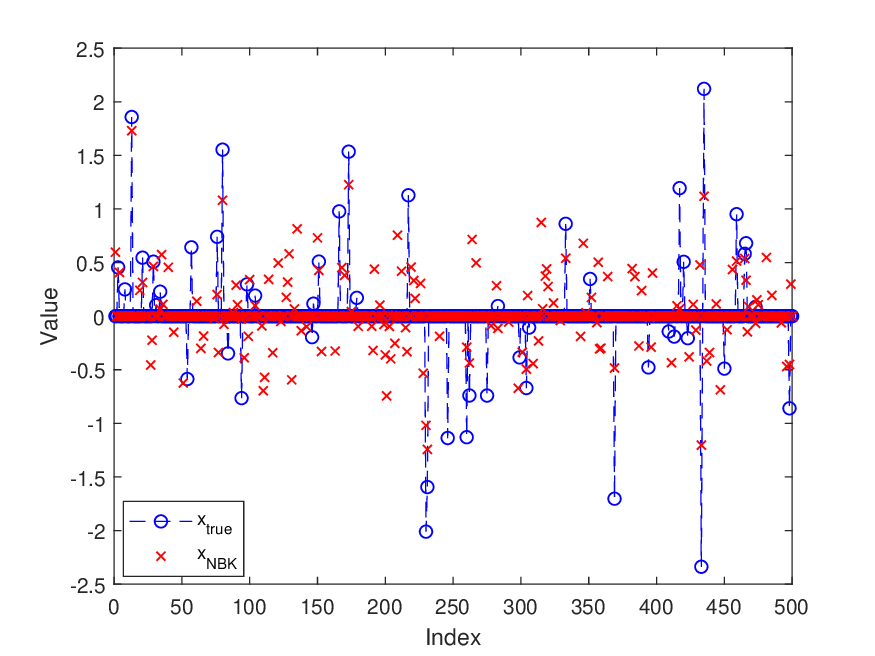}
		\end{minipage}
	}
	\subfigure[MRNBK]
	{
		\begin{minipage}[t]{0.48\linewidth}
			\centering 
			\includegraphics[width=1\textwidth]{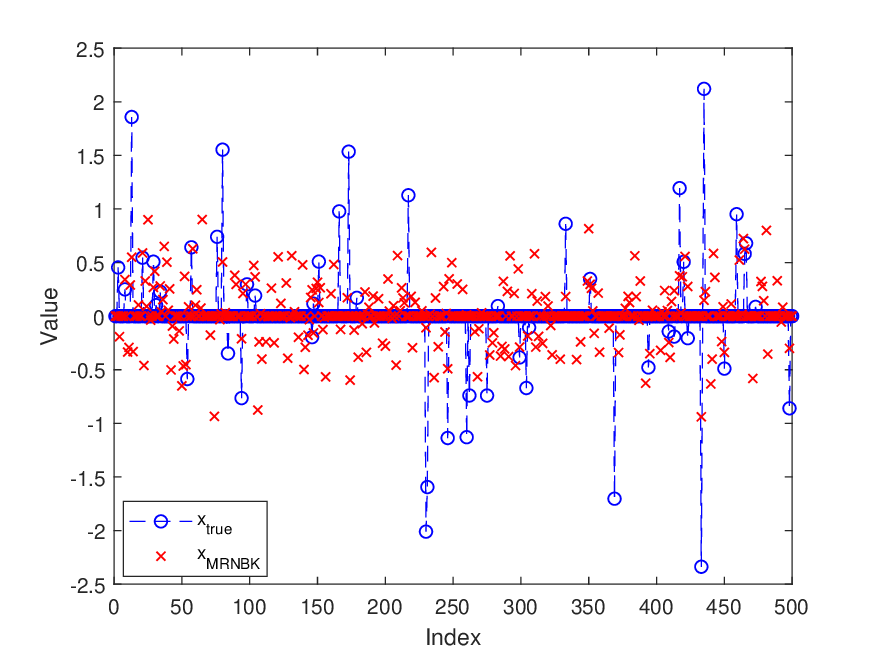}
		\end{minipage}
	} 
    \subfigure[ABNBK-c]
	{
		\begin{minipage}[t]{0.48\linewidth}
			\centering
			\includegraphics[width=1\textwidth]{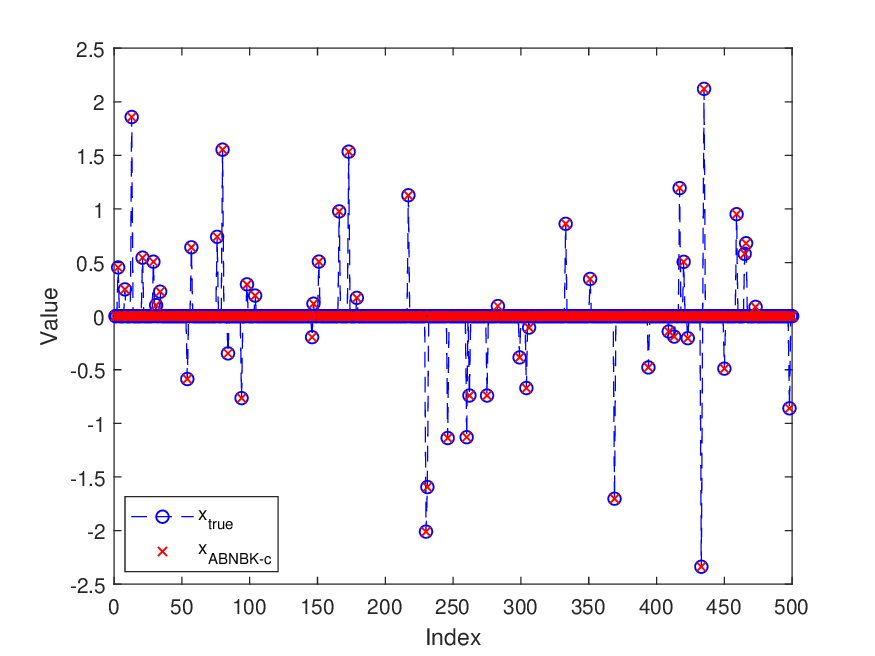}
		\end{minipage}
	}
	\subfigure[ABNBK-a]
	{
		\begin{minipage}[t]{0.48\linewidth}
			\centering 
			\includegraphics[width=1\textwidth]{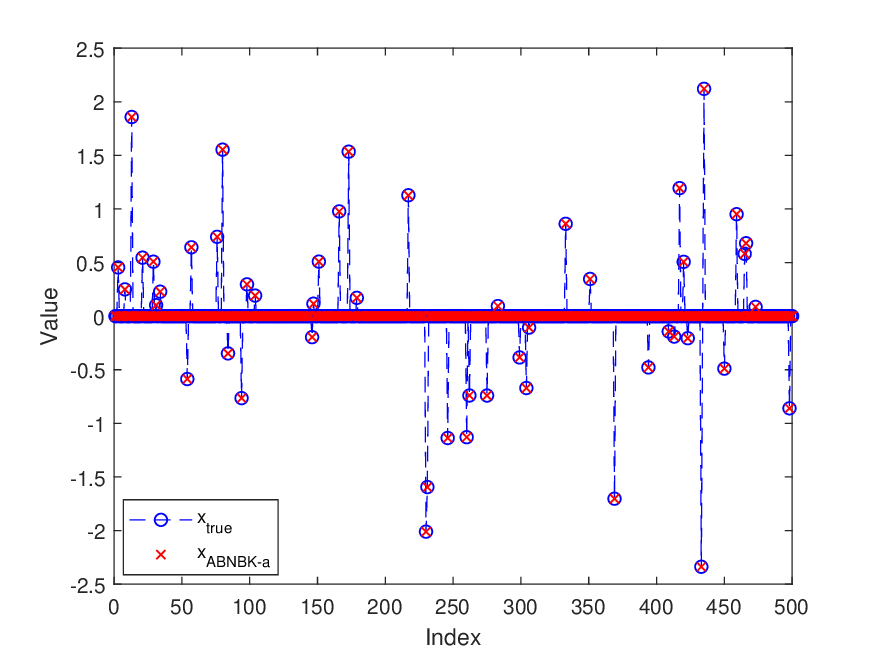}
		\end{minipage}
	} 
  \caption{The exact signal and recovered signals for Example \ref{ex1:DCTmatrix} with $sp=0.1$.} 
      \label{fig:xRecoversp01DCT}
\end{figure}

\begin{figure}[!htbp] 
\centering 
\vspace{-0.4cm}  
 \subfigtopskip=1pt  
\subfigbottomskip=0.1pt  
\subfigcapskip=-5pt 
	\subfigure[NBK]
	{
		\begin{minipage}[t]{0.48\linewidth}
			\centering
			\includegraphics[width=1\textwidth]{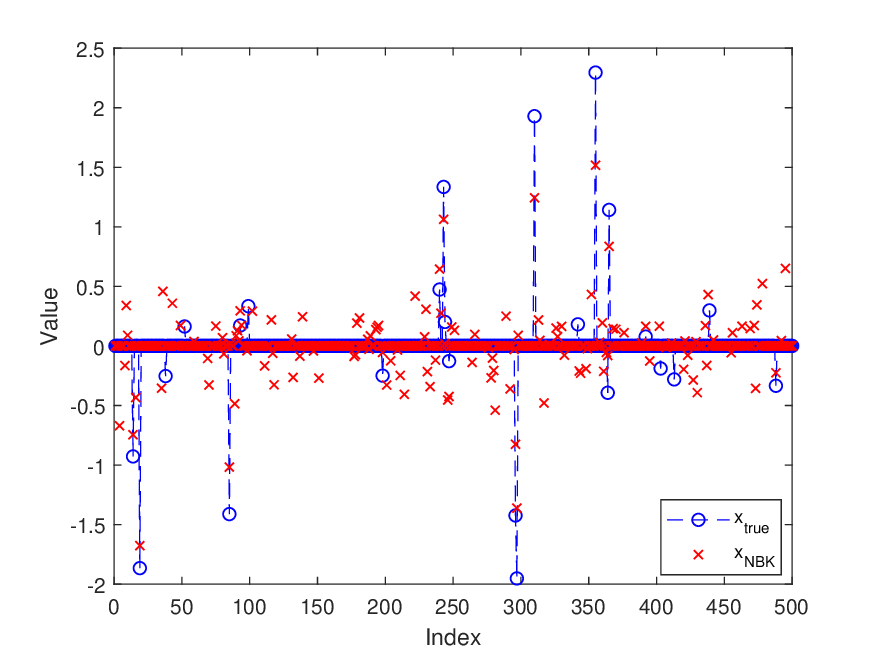}
		\end{minipage}
	}
	\subfigure[MRNBK]
	{
		\begin{minipage}[t]{0.48\linewidth}
			\centering 
			\includegraphics[width=1\textwidth]{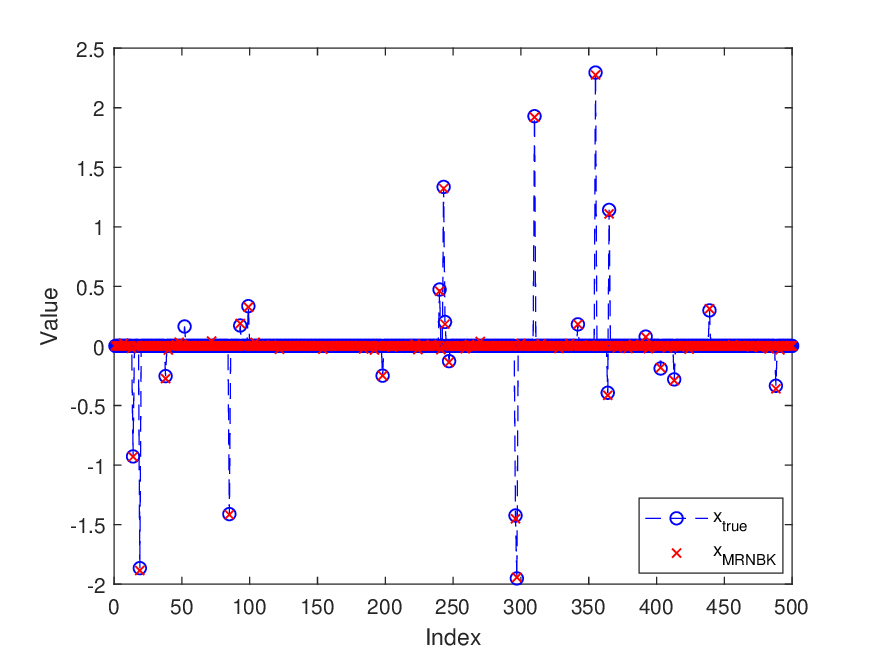}
		\end{minipage}
	} 
    \subfigure[ABNBK-c]
	{
		\begin{minipage}[t]{0.48\linewidth}
			\centering
			\includegraphics[width=1\textwidth]{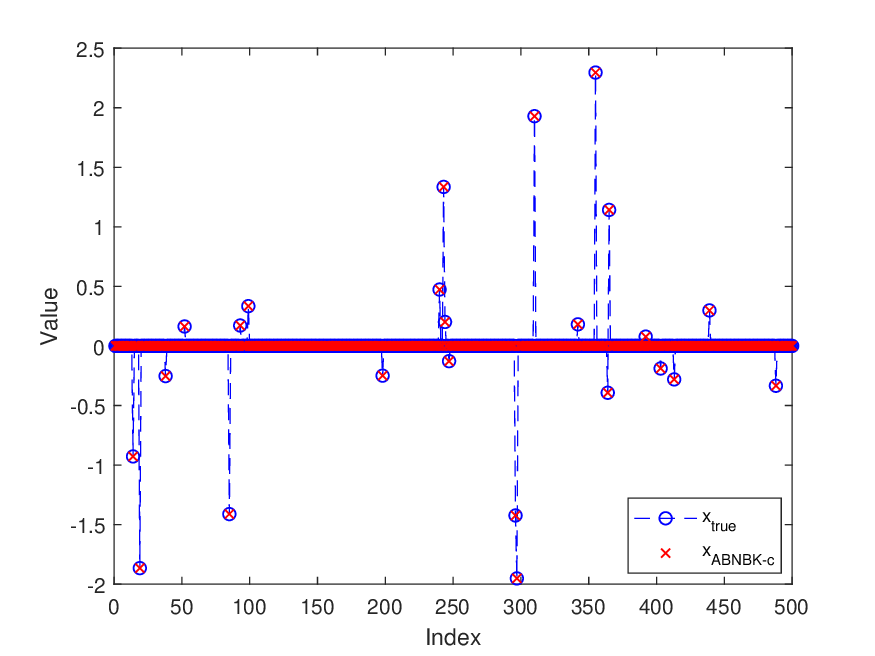}
		\end{minipage}
	}
	\subfigure[ABNBK-a]
	{
		\begin{minipage}[t]{0.48\linewidth}
			\centering 
			\includegraphics[width=1\textwidth]{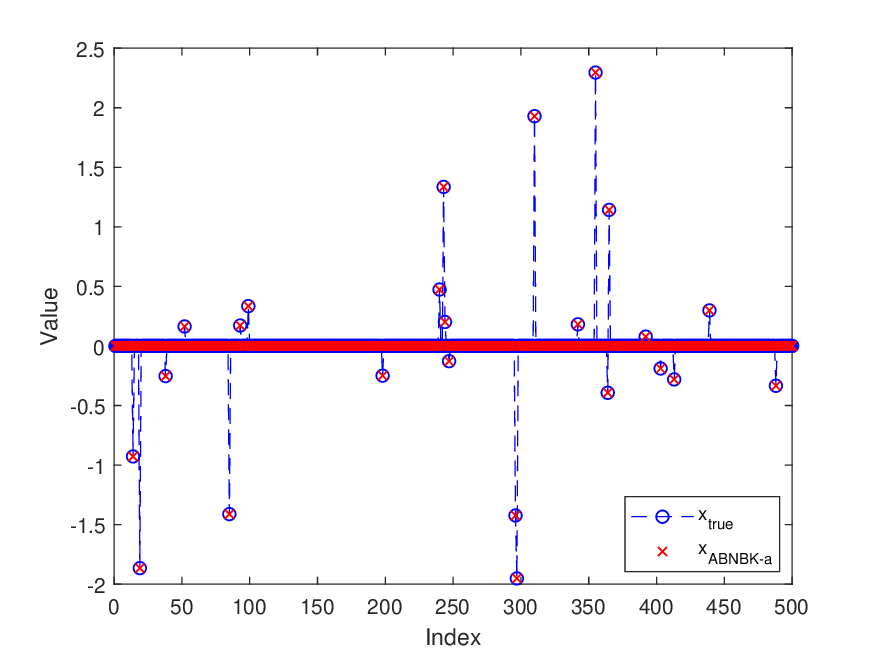}
		\end{minipage}
	} 
  \caption{The exact signal and recovered signals for Example \ref{ex1:DCTmatrix} with $sp=0.05$.}
  \label{fig:xRecoversp005DCT}
\end{figure}

\newpage
\section{Conclusions} \label{secconclu_abnbk} 
In this paper, an averaging block nonlinear Bregman-Kaczmarz method is developed for the nonlinear sparse signal recovery problem. The convergence theory of the averaging block nonlinear Bregman-Kaczmarz method is established under the classical local tangential cone conditions. Moreover, the upper bound of the convergence rate of the averaging block nonlinear Bregman-Kaczmarz method with both constant stepsizes and adaptive stepsizes is given, respectively. 
Numerical experiments demonstrate the efficiency and robustness of the proposed method, which has faster convergence speed and fewer computing time than existing nonlinear Bregman-Kaczmarz methods. \\
 
\noindent \textbf{Funding} This work was supported by National Natural Science Foundation of China (No. 12471357).\\

\bibliographystyle{plain}	 
\bibliography{refsABBregNKa}

\end{document}